\documentclass[11pt]{amsart}

\usepackage{amsmath,amssymb,latexsym,soul,cite,mathrsfs}

\usepackage{color,enumitem,graphicx}
\usepackage[colorlinks=true,urlcolor=blue,
citecolor=red,linkcolor=blue,linktocpage,pdfpagelabels,
bookmarksnumbered,bookmarksopen]{hyperref}
\usepackage[english]{babel}

\usepackage[left=2.9cm,right=2.9cm,top=2.8cm,bottom=2.8cm]{geometry}
\usepackage[hyperpageref]{backref}

\usepackage[colorinlistoftodos]{todonotes}
\makeatletter
\providecommand\@dotsep{5}
\def\listtodoname{List of Todos}
\def\listoftodos{\@starttoc{tdo}\listtodoname}
\makeatother

\numberwithin{equation}{section}
%\pagestyle{myheadings}
% \markboth{}{} \pretolerance=10000
%\def\lb{\lambda}
%\def\var{\varepsilon}
%\def\pil{\left<}
%\def\pir{\right>}

%\def\nd{\noindent}
%\def\thend{\rule{3mm}{3mm}}
%\def\mathbb Re{\mathbb{R}}

%\newtheorem{lem}{Lemma}
%\newtheorem{prop}{Proposition}
%\newtheorem{theo}{Theorem}
%\newtheorem{coro}{Corollary}
%\newtheorem{rem}{Remark}

%\newcommand{\fim}{\hfill\rule{2mm}{2mm}}
%\newcommand{\n}{{\noindent}}
%\newcommand{p}{\displaystyle}
%\newcommand{\mathbb R}{\mbox{${\rm{I\!R}}$}}
%\newcommand{\mathbb RN}{I\!\!R^N}
%\newcommand{\N}{\mbox{${\rm{I\!N}}$}}
%\newcommand{\mathbb R}{\mathrm{I\!R\!}}
%\newcommand{\mathcal N}{\mathrm{I\!N\!}}
%\newcommand{\C}{\mbox{${\rm{C\hspace{-1.8mm}\rule{0.3mm}{2.8mm}}}$}}
%\def\n{\hspace*{0em}}
%\newcommand{}{\displaystyle}
%\def\dis{\displaystyle}
%\def\theequation{\thesection.\arabic{equation}}
%\let\Section=\section

%\def\section{\setcounter{equation}{0}\Section}

\newtheorem{theorem}{Theorem}[section]
\newtheorem{proposition}[theorem]{Proposition}
\newtheorem{lemma}[theorem]{Lemma}
\newtheorem{corollary}[theorem]{Corollary}

\newtheorem{claim}[theorem]{Claim}

\newtheorem{remark}{Remark}
\newtheorem{definition}[theorem]{Definition}

\begin{document}
	
	\title[Existence of solution for Schr\"odinger equation with discontinuous nonlinearity]
	{Existence of solution for Schr\"odinger equation with discontinuous nonlinearity and critical Growth}

	\author{Geovany F. Patricio}

	\address[Geovany F. Patricio]
	{\newline\indent Unidade Acad\^emica de Matem\'atica
		\newline\indent 
		Universidade Federal de Campina Grande,
		\newline\indent
		58429-970, Campina Grande - PB - Brazil}
	\email{\href{fernandes.geovany@yahoo.com.br}{fernandes.geovany@yahoo.com.br}}

	\pretolerance10000
	
	%\begin{document}
	
	\begin{abstract}
		\noindent This paper concerns with the existence of nontrivial solution for the following problem
		\begin{equation}
		\left\{\begin{aligned}
		-\Delta u + V(x)u & = \gamma H_{e}(|u|-a)|u|^{q-2}u+|u|^{2^{*}-2}u\;\;\mbox{ in}\;\;\mathbb{R}^{N},\nonumber \\
		u \in H^{1}(\mathbb{R}^{N}). 
		\end{aligned}
		\right.
		\end{equation} 
where, $N\geq 3$, $\gamma\geq 0$, $H_{e}:\mathbb{R} \to \mathbb{R}$ denotes the Heaviside function,
$a \geq 0$, $2<q<2^*$ and  $V:\mathbb{R^{N}} \to \mathbb{R}$ is $\mathbb{Z}^{N}$-periodic with $\beta= 0$ does not belong to the spectrum of $-\Delta+V$.
\end{abstract}

	\thanks{Geovany F. Patricio was
		supported by  CAPES, Brazil }

	\subjclass[2019]{ Primary: 35J15, 35J20, 35A15, 35B33; Secondary: 26A27} 
	\keywords{Elliptic equations, Variational methods,  Discontinuous nonlinearity,  Critical growth}

		\maketitle
\section{Introduction}

%To add to the study done in \cite{AG}, a study was made in the case of critical growth.
In this paper we study the existence of nontrivial solution for the following class of elliptic problems
$$
\left\{\begin{aligned}
-\Delta u + V(x)u &= f_{\gamma}(x,u)\;\;\mbox{in}\;\;\mathbb{R}^{N} \\
u \in H^{1}(\mathbb{R}^{N}),
\end{aligned}
\right. \leqno{(P_{\gamma}^{*})}
$$
where, $f_{\gamma}(x,s)=\gamma H_{e}(|s|-a)|s|^{q-2}s+|s|^{2^{*}-2}s$
and $H_e:\mathbb{R} \to \mathbb{R}$ denotes the Heaviside function, that is, 
$$
H_e(t)=
\left\{
\begin{array}{l}
	0, \quad \mbox{if} \quad t \leq 0\\
	1, \quad \mbox{if} \quad t > 0.
\end{array}
\right.
$$ 
In addition, $\gamma\geq 0$, $ N \geq 3$ and $V$ is continuou function, periodic with respect to $x$-variable,
$$
0 \notin \sigma(-\Delta+V),\;\;\mbox{the spectrum of}\;\; -\Delta +V; \leqno{(V_{1})}
$$  
and
$$
 \sigma(-\Delta+V)\cap (-\infty, 0)\neq \emptyset. \leqno{(V_{2})}
$$  

Problems of type $(P_{\gamma}^{*})$, when nonlinearity is considered continuous, has been studied by Chabrowski and Szulkin \cite{Chabrowski}, M. Schechter and W. Zou \cite{Schechter}, Alves and Germano \cite{AG1}. In \cite{Chabrowski} Chabrowski and Szulkin studied the following class of problems
$$
\left\{\begin{aligned}
-\Delta u + V(x)u &= K(x) |u|^{2^{*}-2}u+g(x,u) \;\;\mbox{in}\;\;\mathbb{R}^{N} \\
u \in H^{1}(\mathbb{R}^{N}),
\end{aligned} 
\right.\leqno{(P)}
$$
where $N\geq 4$, $V$, $K$, $g$ are periodic in $x_{j}$ for $1 \leq  j \leq N$, $K > 0$, $g$ is of subcritical growth and $0$ is in a gap of the spectrum of $-\Delta+V$. They proved the existence of a nontrivial solution using the linking theorem. 

Knowing that the equation
\begin{equation}\label{Crit}
	-\Delta u+\beta u= |u|^{2^{*}-2}u,\;\mbox{in}\;\;\mathbb{R}^{N},
\end{equation}
when $\beta \neq 0$, has only the trivial solution in $H^{1}(\mathbb{R}^{N})$(cf \cite{Benci}). Thereby, when $\gamma=0$ in $(P_{\gamma}^{*})$ we are left with an equation similar to (\ref{Crit}) and so the existence of nontrivial solution of $(P_{\gamma}^{*})$ is an interesting problem. 
One of the pioneering results involving critical problems was obtained in article of Br\'ezis and Nirenberg \cite{Nirenberg} and has helped and motivated a great amount of research on this class of problem involving critical exponent. 

In several articles, the authors are focus on equations with subcritical growth, since critical growth bring many difficulties, because, yet in bounded domain $\Omega \subset \mathbb{R}^{N}$, the Sobolev embedding  $H_{0}^{1}(\Omega)\hookrightarrow L^{2^{*}}(\Omega)$ is not compact. In recent years, the study of equations with critical growth has made great progress and has attracted the attention of many authors. One way to regain compactness is constrain the functional value in a suitable interval, see for example \cite{Chabrowski}. When the functional is strongly defined, then the functional has mountain pass geometry, so it is easy to estimate the functional level. For example, under some weaker conditions, Lins and Silva \cite{Lins} considered the existence of nontrivial solutions 
\begin{equation*}
-\Delta u+V(x)u=f(x,u)
\end{equation*}
 when $f$ is asymptotically periodic in $x$. However, for the strongly indefinite case, the problem becomes very difficult e.g. see \cite{Chabrowski, Schechter, AG1}.

We will study a class of semilinear problems with discontinuous linearity in $\mathbb{R}^{N}$, strongly indefinite and critical growth $(P_{\gamma}^{*})$. The interest in the study of nonlinear partial differential equations with discontinuous nonlinearities has increased because many free boundary problems arising in mathematical physics may be stated in this from. Among these problems, we have the seepage surface problem and the Elenbaas equation, see for example \cite{Chang1,chang2, chang3}.

Variational methods for problems with discontinuous nonlinearity and critical exponent have been applied to several problems.  Alves and Bertone \cite{alves3}, got two nonnegative solutions for the following quasilinear problem
\begin{equation*}
-\Delta_{p}u = H_{e}(u-a)u^{p^{*}-1}+\lambda h(x),\;\mbox{in}\;\mathbb{R}^{N}
\end{equation*}
where $H_{e}$ is the Heaviside function. Alves, Bertone and Goncalves in \cite{alves1} studied the equation
\begin{equation*}
-\Delta u= u^{2^{*}-1}+\lambda h(x)H(u-a)u^{q},\;\mbox{in}\;\mathbb{R}^{N}
\end{equation*}
obtained the existence of positive solutions for $0\leq q\leq 1$ and $1<q<2^{*}-1.$

A rich literature is available for problems with discontinuous nonlinearities, and we refer the reader to Chang \cite{Chang1}, Ambrosetti and Badiale \cite{Badiale}, Alves and Patricio \cite{AlvesPatricio}, Cerami \cite{cerami}, Alves et al. \cite{alves1}, Alves et al. \cite{alves2}, Alves and Bertone \cite{alves3}, Alves and Nascimento \cite{alves4}, Cerami \cite{cerami, Cerami}, Badiale \cite{badiale2}, Dinu \cite{dinu}, Gasi\'nski and Papageorgiou \cite{GP},  Kourogenis and Papageorgiou \cite{Papageorgiou}, Mironescu and R\u adulescu \cite{Radulescu 2}, R\u adulescu \cite{Radulescu, Radulescu 1, Radulescu 4, Radulescu 5, Radulescu 6, Radulesco 7} and their references. Several techniques have been developed or applied in their study, such as variational methods for nondifferentiable functionals, lower and upper solutions, global branching, and the theory of multivalued mappings.

In this paper a study is made of a strongly indefinite problem with discontinuous nonlinearity that involves two extreme conditions that result in loss of compactness, namely, when we consider the space $\mathbb{R}^{N}$ and the critical growth. More precisely, we will find nontrivial solution to problem $(P_{\gamma}^{*})$ and for this we will use a link theorem for a class of functional locally Lipschitz due to Alves and Patricio \cite{AG}, in which they propose a generalization of the link terorema due to Kryszewski and Szulkin \cite{Kryszewski} in which they consider class $C^{1}$ functional. One of the main difficulty in the present paper was to make sure that the weak limit of sequences $(PS)$ is nontrivial for a given level. 

The present paper is relevant, because it is the first result that establishes the existence of solutions for Schr\"odinger equation strongly indefinite with discontinuous nonlinearity and critical growth.

The main result is the following
\begin{theorem}\label{Teorema1}
Suppose that conditions $(V_{1})-(V_{2})$ are satisfied. If $N \geq 4$, the problem $(P_{\gamma}^{*})$ has a nontrivial solution. If $N=3$, there are $\gamma>0$ and $a_{0}=a_{0}(\gamma)>0$ such that, $0\leq a \leq a_{0}$, the problem $(P_{\gamma}^{*})$ has a nontrivial solution.
\end{theorem}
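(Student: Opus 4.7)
The natural plan is to set this up as a strongly indefinite variational problem for a locally Lipschitz functional and to apply the nonsmooth linking theorem of Alves--Patricio (cited as \cite{AG}). Because $0 \notin \sigma(-\Delta + V)$ and $\sigma(-\Delta+V)\cap(-\infty,0)\neq\emptyset$, the quadratic form associated with $-\Delta+V$ gives an orthogonal splitting $H^{1}(\R^{N})=E^{-}\oplus E^{+}$ with both summands nontrivial, and an equivalent norm such that $\tfrac12\|u^+\|^2-\tfrac12\|u^-\|^2$ represents the quadratic part of the energy. I would define
$$
I_{\gamma}(u)=\tfrac12\|u^{+}\|^{2}-\tfrac12\|u^{-}\|^{2}-\tfrac{\gamma}{q}\!\int_{\R^{N}}\!H_{e}(|u|-a)|u|^{q}\,dx-\tfrac{1}{2^{*}}\!\int_{\R^{N}}\!|u|^{2^{*}}\,dx.
$$
The first integrand is discontinuous, so $I_\gamma$ is only locally Lipschitz; I work with Clarke's generalized gradient $\partial I_\gamma$. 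Critical points correspond to weak solutions of a differential inclusion in which $H_e(|u|-a)$ is replaced by its filled-in graph, and I will later verify that on the set $\{|u|=a\}$ the inclusion forces the standard pointwise equation.

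Next I would verify the linking geometry: there exist $\rho>0$ with $\inf_{\partial B_{\rho}\cap E^{+}}I_{\gamma}>0$, and for fixed $e\in E^{+}\setminus\{0\}$ and $R>\rho$ sufficiently large, $I_{\gamma}\le 0$ on the boundary of the cylinder $Q_{R}=\{u=v+te:v\in E^{-},\ \|u\|\le R,\ 0\le t\le R\}$. The Alves--Patricio version of the Kryszewski--Szulkin theorem then yields a Cerami sequence $(u_{n})$ at the minimax level
$$
c_{\gamma}=\inf_{h\in\Gamma}\max_{u\in Q_{R}}I_{\gamma}(h(1,u))\in(0,\infty).
$$
Boundedness of $(u_{n})$ follows in the usual way from the Cerami condition combined with the $2^{*}$-superquadratic nature of the nonlinearity. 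The essential quantitative step is the estimate
$$
c_{\gamma}<\tfrac{1}{N}S^{N/2},
$$
where $S$ is the best Sobolev constant. I would derive it by pushing the standard Aubin--Talenti bubbles $U_{\varepsilon}$ along $\R e$: for $N\ge 4$ the integral $\int H_{e}(|U_{\varepsilon}|-a)|U_{\varepsilon}|^{q}\,dx$ is large enough to strictly depress the maximum of $I_{\gamma}$ on $Q_R$ below $\tfrac{1}{N}S^{N/2}$ with no restriction on the parameters, precisely as in Br\'ezis--Nirenberg. For $N=3$, the gain coming from the subcritical term is too weak unless $q$ is close to $2^{*}$, so one must compensate by taking $\gamma$ large and $a$ small: this produces the threshold $a_{0}(\gamma)$ in the statement.

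With the level estimate in hand, the heart of the argument is to produce a nontrivial weak limit. Vanishing is excluded through the $\mathbb{Z}^{N}$-periodicity of $V$: if $(u_n)$ vanishes in the Lions sense, it tends strongly to zero in $L^{s}$ for $2<s<2^{*}$, and the critical-growth term together with the level estimate $c_{\gamma}<\tfrac{1}{N}S^{N/2}$ gives a contradiction. Otherwise, one finds $y_{n}\in\mathbb{Z}^{N}$ such that $\tilde u_{n}=u_{n}(\cdot+y_{n})$ still lies at the same level, is still Cerami, and satisfies $\liminf\int_{B_{1}(0)}|\tilde u_{n}|^{2}\,dx>0$; along a subsequence $\tilde u_{n}\rightharpoonup u\neq 0$. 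The periodicity of every coefficient makes the shifted functional identical to the original one, so $u$ is a critical point of $I_{\gamma}$ in the Clarke sense. The final and, as the authors warn, most delicate step is to upgrade this to a genuine solution of $(P_{\gamma}^{*})$: using the structure of $\partial I_{\gamma}$ one shows that on $\{x:|u(x)|=a\}$ the measurable selection of $H_{e}(|u|-a)$ produced by the subgradient forces $-\Delta u+V(x)u-|u|^{2^{*}-2}u=0$, and by a unique-continuation/measure argument (in the style of Chang and Alves--Bertone) the set $\{|u|=a\}$ has zero Lebesgue measure whenever $a>0$, so the original pointwise equation holds a.e. The main obstacle I foresee is exactly this nontriviality plus regularization step together with the sharp energy estimate in dimension three; the link geometry, boundedness and compactness reductions are expected to be fairly routine adaptations of \cite{Chabrowski} to the nonsmooth setting provided by \cite{AG}.
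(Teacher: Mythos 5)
Your overall architecture coincides with the paper's: the orthogonal splitting $H^{1}(\mathbb{R}^{N})=E^{+}\oplus E^{-}$ from $(V_{1})$, the locally Lipschitz functional and Clarke gradient, the Alves--Patricio nonsmooth linking theorem producing a bounded sequence at a level $c_{\gamma}$, the exclusion of vanishing via a Lions-type lemma plus $\mathbb{Z}^{N}$-translation, and the passage to the limit using the weak-$*$ closedness of $\partial\Psi_{\gamma}$ (Proposition \ref{7}). However, there is one genuine gap: your mechanism for the crucial estimate $c_{\gamma}<\frac{1}{N}S^{N/2}$ when $N\geq 4$. You attribute the strict depression of the level to the term $\gamma\int H_{e}(|U_{\varepsilon}|-a)|U_{\varepsilon}|^{q}$, ``precisely as in Br\'ezis--Nirenberg, with no restriction on the parameters.'' But the theorem for $N\geq 4$ allows $\gamma=0$ (this is emphasized in Remark \ref{Obs1} and is the interesting borderline case, since the pure critical equation with constant potential has only the trivial solution), and for $\gamma=0$ that term is identically zero and provides no gain. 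The paper's argument uses a different source of negativity: by $(V_{2})$ the potential must be negative somewhere, so (Remark \ref{88}) one may assume $V\leq-\beta<0$ on a ball centered at the concentration point, whence $\int V\varphi_{\varepsilon}^{2}\leq-\beta b\,\varepsilon^{2}|\log\varepsilon|$ for $N=4$ and $\leq-\beta b\,\varepsilon^{2}$ for $N\geq5$; these dominate the $O(\varepsilon^{N-2})$ errors in (\ref{91}) and yield Proposition \ref{96} and then Proposition \ref{98} (where the interaction with $E^{-}$ is controlled via Proposition \ref{87}). Your $N=3$ strategy ($\gamma$ large, $a\leq a_{0}(\gamma)$ small, exploiting the subcritical term) does match the paper's Lemmas \ref{106}--\ref{112} and the final lemma of that section.

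Two secondary remarks. First, after translating by $z_{n}\in\mathbb{Z}^{N}$ the paper only retains the lower bound $\int_{B(0,1+\sqrt{N})}|v_{n}|^{2^{*}}\geq\delta/4$; since weak $H^{1}$ convergence does not give strong local $L^{2^{*}}$ convergence, nontriviality of the weak limit is not automatic and requires the concentration--compactness argument of Claim \ref{103} (atoms of the limit measure would carry mass at least $S^{N/2}$, contradicting $c_{\gamma}<\frac{1}{N}S^{N/2}$). Your sketch asserts an $L^{2}$ lower bound and jumps to $u\neq0$; that route can be made to work, but then the contradiction in the vanishing case must be rederived with the subcritical Lions lemma and the level estimate, which you do not spell out. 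Second, your final ``upgrade'' step -- showing $|\{|u|=a\}|=0$ so that the pointwise equation holds -- is not part of proving Theorem \ref{Teorema1}: the paper's notion of solution is the differential inclusion $-\Delta u+Vu\in\partial_{t}F_{\gamma}(x,u)$, and the measure-zero property is established only under the additional hypothesis $\sup V<a^{2^{*}-2}$ (Remark \ref{Obs1} and the Corollary), not ``whenever $a>0$'' as you claim.
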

\begin{definition}
For a solution to the problem  $(P_{\gamma}^{*})$ 
we understand it to be a function $u \in H^{1}(\mathbb{R^{N}})\cap W_{loc}^{2,p}(\mathbb{R^{N}})$, for some $p>1$, such that
$$
-\Delta u(x) + V(x) u(x) \in \partial_{t} F_{\gamma}(x,u(x))\;\;\mbox{a.e in}\;\; \mathbb{R}^{N},
$$
where $F_{\gamma}(x,t)= \int_{0}^{t}f_{\gamma}(x,s)ds$ and $\partial_t F_{\gamma}$ denotes the generalized gradient of $F_{\gamma}$ with respect to variable $t$. 
When	
\begin{equation*}
-\Delta u(x)+V(x)u(x)=f_{\gamma}(x,u(x)),\;\mbox{a.e in}\;\mathbb{R}^{N},
\end{equation*}	
then we say that $u$ is strong solution.
\end{definition}
\begin{remark}\label{Obs1}
In case $N\geq 4$, we will not have restrictions for constants $a\geq 0$ and $\gamma\geq 0$ to obtain a nontrivial solution to the problem $(P_{\gamma}^{*})$. Thereby, in case $N\geq 4$, we will have a strong solution for $(P_{\gamma}^{*})$ 
choosing $a>0$ so that
\begin{equation}\label{V}
\sup_{x \in \mathbb{R}^{N}}V(x)<a^{2^{*}-2}.
\end{equation}
First note that (\ref{V}) is possible because we are assuming that $V:\mathbb{R}^{N}\rightarrow \mathbb{R}$ is $\mathbb{Z}^{N}$-periodic and therefore bounded, i.e, there exists $K_{0}>0$ such that $|V(x)|\leq K_{0}$ for all $x \in \mathbb{R}^{N}$. So, just choose $a>K_{0}$.

Using the definition of generalized gradient, by a simple calculation, we get:
$$
\partial_{t}F_{\gamma}(x,u(x))= \left\{\begin{aligned}
u(x)^{2^{*}-1},\;\;\mbox{if}\;\; 0\leq u(x)<a \\
[a^{2^{*}-1}, a^{2^{*}-1}+\gamma a^{q-1}],\;\; \mbox{if}\;\; u(x)=a \\
u(x)^{2^{*}-1}+\gamma u(x)^{q-1},\;\; \mbox{if}\;\; u(x)>a.
\end{aligned}
\right. 
$$
or
$$
\partial_{t}F_{\gamma}(x,u(x))= \left\{\begin{aligned}
(-u(x))^{2^{*}-2}u(x),\;\;\mbox{if}\;\; -a<u(x)\leq 0 \\
[ (-a)^{2^{*}-1}+\gamma (-a)^{q-1}, (-a)^{2^{*}-1}],\;\; \mbox{if}\;\; u(x)=-a \\
(-u(x))^{2^{*}-2}u(x)+\gamma (-u(x))^{q-2}u(x),\;\; \mbox{if}\;\; u(x)<-a.
\end{aligned}
\right. 
$$	
\end{remark}
Suposse that $u$ is a solution of $(P_{\gamma}^{*})$, then
$$
 \left\{\begin{aligned}
-\Delta u(x)+V(x)u(x)=u(x)^{2^{*}-1},\;\;\mbox{if}\;\; 0\leq u(x)<a \\
-\Delta u(x)+V(x)u(x) \in [a^{2^{*}-1}, a^{2^{*}-1}+\gamma a^{q-1}],\;\; \mbox{if}\;\; u(x)=a \\
-\Delta u(x)+V(x)u(x)=u(x)^{2^{*}-1}+\gamma u(x)^{q-1},\;\; \mbox{if}\;\; u(x)>a.
\end{aligned}
\right. 
$$
or
$$
\left\{\begin{aligned}
-\Delta u(x)+V(x)u(x)=(-u(x))^{2^{*}-2}u(x),\;\;\mbox{if}\;\; -a<u(x)\leq 0 \\
-\Delta u(x)+V(x)u(x) \in [ (-a)^{2^{*}-1}+\gamma (-a)^{q-1}, (-a)^{2^{*}-1}],\;\; \mbox{if}\;\; u(x)=-a \\
-\Delta u(x)+V(x)u(x)=(-u(x))^{2^{*}-2}u(x)+\gamma (-u(x))^{q-2}u(x),\;\; \mbox{if}\;\; u(x)<-a.
\end{aligned}
\right. 
$$
Set 
\begin{equation*}
A_{+}=\{x \in \mathbb{R}^{N}: u(x)=a\}\;\;\mbox{and}\;\;A_{-}=\{x \in \mathbb{R}^{N}: u(x)=-a\}.
\end{equation*}
Under (\ref{V}) we obtain that the Lebesgue
measure of $A_{+}$ (or $A_{-}$) is zero. Otherwise if measure of $A_{+}$ (or  $A_{-}$) is positive, then by applying Stampacchia theorem \cite{Stampacchia},
\begin{equation*}
-\Delta u(x)=0\;\;\mbox{in}\;\;  A_{+}\; (\mbox{or}\; A_{-}),
\end{equation*}
that is, 
\begin{eqnarray}
&&V(x)a \in [a^{2^{*}-1}, a^{2^{*}-1}+\gamma a^{q-1}],\;\mbox{for}\;\; x \in A_{+}, \nonumber \\
\mbox{or}\nonumber \\
&&-V(x)a \in [ (-a)^{2^{*}-1}+\gamma (-a)^{q-1}, (-a)^{2^{*}-1}],\;\mbox{for}\;\; x \in A_{-}. \nonumber
\end{eqnarray}
In any of the above cases, once $(-a)^{2^{*}-2}= a^{2^{*}-2}$, we have
\begin{equation*}
a^{2^{*}-2}\leq V(x),\;\mbox{for all}\;\;x \in A_{+}\;(\mbox{or}\;A_{-}),
\end{equation*}
a contradiction with (\ref{V}).

By remark \ref{Obs1}, we can enunciate the following:
\begin{corollary}
	If $N\geq 4$, assuming $(V_{1})$ and $(V_{2})$, there exist $a>0$ such that the problem $(P_{\gamma}^{*})$ has a nontrivial strong solution.
\end{corollary}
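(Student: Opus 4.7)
The plan is to obtain the corollary as a direct consequence of Theorem \ref{Teorema1} combined with the reasoning already developed in Remark \ref{Obs1}. Since we are in the regime $N\ge 4$, Theorem \ref{Teorema1} supplies a nontrivial solution $u\in H^{1}(\mathbb{R}^{N})\cap W^{2,p}_{loc}(\mathbb{R}^{N})$ of $(P_{\gamma}^{*})$ for every choice of $a\ge 0$ and $\gamma\ge 0$, and what remains is to calibrate $a$ so that the differential inclusion at the jump points collapses to an equation almost everywhere.

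The first step is to exploit the $\mathbb{Z}^{N}$-periodicity and continuity of $V$ to obtain a bound $K_{0}>0$ with $|V(x)|\le K_{0}$ for all $x\in\mathbb{R}^{N}$; picking any $a>K_{0}^{1/(2^{*}-2)}$ secures condition (\ref{V}). For this $a$, Theorem \ref{Teorema1} gives a nontrivial solution $u$ in the multivalued sense, so that pointwise a.e.\ one of the branches described in Remark \ref{Obs1} holds. Setting
\[
A_{+}=\{x\in\mathbb{R}^{N}:u(x)=a\},\qquad A_{-}=\{x\in\mathbb{R}^{N}:u(x)=-a\},
\]
the whole problem is to show that $|A_{+}|=|A_{-}|=0$, because on the complement the multivalued gradient $\partial_{t}F_{\gamma}(x,u(x))$ is single-valued and equal to $f_{\gamma}(x,u(x))$.

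To handle $A_{+}$ I would apply Stampacchia's theorem to the $W^{2,p}_{loc}$ representative $u$: on the level set $\{u=a\}$, the classical result yields $\nabla u=0$ and $\Delta u=0$ a.e. Plugging this into the differential inclusion on $A_{+}$ forces
\[
V(x)\,a\in [a^{2^{*}-1},\,a^{2^{*}-1}+\gamma a^{q-1}]\quad\text{a.e.\ in }A_{+},
\]
so in particular $V(x)\ge a^{2^{*}-2}$ on $A_{+}$. If $|A_{+}|>0$, this contradicts (\ref{V}); hence $|A_{+}|=0$. The treatment of $A_{-}$ is identical after noting that $(-a)^{2^{*}-2}=a^{2^{*}-2}$ and that the inclusion there also yields $|V(x)|\,a\ge a^{2^{*}-1}$, i.e.\ $V(x)\ge a^{2^{*}-2}$, again contradicting (\ref{V}).

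With $|A_{+}\cup A_{-}|=0$ established, the pointwise equation
\[
-\Delta u(x)+V(x)u(x)=f_{\gamma}(x,u(x))
\]
holds a.e.\ in $\mathbb{R}^{N}$, so $u$ is a strong solution in the sense of the definition given after Theorem \ref{Teorema1}. The only delicate point I foresee is the Stampacchia step: one needs the local $W^{2,p}$ regularity of $u$, but this is built into the definition of solution, so the argument goes through without extra work. The remainder is essentially a transcription of the computation already sketched in Remark \ref{Obs1}.
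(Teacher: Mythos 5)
Your proposal is correct and follows essentially the same route as the paper: invoke Theorem \ref{Teorema1} for $N\ge 4$, choose $a$ large enough that $\sup_{x}V(x)<a^{2^{*}-2}$, and use Stampacchia's theorem on the level sets $A_{\pm}$ to show they have measure zero, so the inclusion collapses to the pointwise equation. Your calibration $a>K_{0}^{1/(2^{*}-2)}$ is in fact slightly more careful than the paper's ``just choose $a>K_{0}$'', which only guarantees $a^{2^{*}-2}>K_{0}$ when $2^{*}-2\ge 1$.
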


\noindent \textbf{Notation:} From now on, otherwise mentioned, we use the following notations:
\begin{itemize}
	\item $B_r(u)$ is an open ball centered at $u$ with radius $r>0$, $B_r=B_r(0)$.
	
	\item $X^{*}$ denotes the dual topological space of $X$ and $||\;\;\;||_{*}$ denotes the norm in $X^{*}$.
	
	\item   $C$ denotes any positive constant, whose value is not relevant.
	
	\item  $||\,\,\,||_p$ denotes the usual norm of the Lebesgue space $L^{p}(\mathbb{R}^N)$, for $p \in [1,+\infty]$.
	
	\item  $||\,\,\,||_{H}$ denotes the usual norm of the Orlicz space $L^{H}(\mathbb{R}^N)$ associated the $N$-function $H$.
	
	\item If $u:\mathbb{R}^N \to \mathbb{R}$ is mensurable function, the integral $\int_{\mathbb{R}^N}u\,dx$ will be denoted by $\int_{\mathbb{R}^N}u$.
	
	\item $o_{n}(1)$ denotes a real sequence with $o_{n}(1)\to 0$ as $n \to +\infty$.
	
	\item If $k>0$, $O(\varepsilon^{k})$ denotes a function that $\frac{O(\varepsilon^{k})}{\varepsilon^{k}}$ is bounded as $\varepsilon\rightarrow 0^{+}$.
	
\end{itemize}

\section{Basic results from nonsmooth analysis}

In this section, for the reader's convenience, we recall some definitions and basic results on the critical point theory of locally Lipschitz functionals as developed by Chang \cite{Chang1}, Clarke \cite{Clarke, Clarke1} and Grossinho and Tersian \cite{rosario}.

Let $(X, ||\;||_{X})$ and $(Y, ||\;||_{Y})$ be a real Banach spaces. A functional $I :X \rightarrow \mathbb{R} $ is locally Lipschitz, $I\in Lip_{loc}(X, \mathbb{R})$ for short, if given $u \in X$ there is an open neighborhood $V:=V_{u} \subset X$ of $u$, and a constant $K=K_{u}>0$ such that
\begin{equation*}
	|I(v_{2})-I(v_{1})| \leq K ||v_{1}-v_{2}||_{X},\;\;v_{i} \in V,\;i=1,2.
\end{equation*}
The generalized directional derivative of $I$ at $u$ in the direction of $v \in X$ is defined by
\begin{equation*}
	I^{\circ}(u;v)= \limsup_{h\rightarrow 0, \delta \downarrow 0}\frac{1}{\delta}\left(I(u+h+\delta v)-I(u+h)\right).
\end{equation*}
The generalized gradient of $I$ at $u$ is the set
\begin{equation*}
	\partial I(u)= \{\xi \in X^{*}\;;\; I^{\circ}(u;v) \geq \left<\xi, v\right>\,;\forall\; v \in X \}.
\end{equation*} 
\begin{lemma}
	If $I$ is continuously differentiable to Fr\'echet in an open neighborhood of $u \in X$, we have $\partial I(u)= \{I'(u)\}$.
\end{lemma}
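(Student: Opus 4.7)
The plan is to reduce the statement to the identity $I^{\circ}(u;v)=\langle I'(u),v\rangle$ for every $v\in X$, and then to exploit the fact that this identity is linear in $v$ (hence in particular symmetric under $v\mapsto -v$) to squeeze the generalized gradient down to a single functional.

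First I would prove the key identity $I^{\circ}(u;v)=\langle I'(u),v\rangle$. Fix $v\in X$ and work on an open neighborhood $V$ of $u$ where $I$ is $C^{1}$. For $h\in X$ small and $\delta>0$ small, $u+h$ and $u+h+\delta v$ both lie in $V$, so the classical mean value theorem applied to the real function $t\mapsto I(u+h+t\delta v)$ on $[0,1]$ yields some $\theta=\theta(h,\delta)\in(0,1)$ with
\begin{equation*}
\frac{1}{\delta}\bigl(I(u+h+\delta v)-I(u+h)\bigr)=\bigl\langle I'(u+h+\theta\delta v),\,v\bigr\rangle.
\end{equation*}
Since $I'$ is continuous at $u$, letting $h\to 0$ and $\delta\downarrow 0$ shows that the right-hand side tends to $\langle I'(u),v\rangle$. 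Taking the $\limsup$ gives $I^{\circ}(u;v)\le\langle I'(u),v\rangle$, and the same argument along a sequence where equality is achieved gives the reverse inequality, so $I^{\circ}(u;v)=\langle I'(u),v\rangle$.

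Second I would conclude the characterization of $\partial I(u)$. The inclusion $I'(u)\in\partial I(u)$ is immediate from the identity just proved, since $\langle I'(u),v\rangle=I^{\circ}(u;v)$ for every $v$. Conversely, suppose $\xi\in\partial I(u)$. Then for every $v\in X$,
\begin{equation*}
\langle \xi,v\rangle\le I^{\circ}(u;v)=\langle I'(u),v\rangle,
\end{equation*}
and applying the same inequality to $-v$ gives $-\langle\xi,v\rangle\le-\langle I'(u),v\rangle$, i.e.\ $\langle\xi,v\rangle\ge\langle I'(u),v\rangle$. Combining, $\langle\xi,v\rangle=\langle I'(u),v\rangle$ for every $v\in X$, hence $\xi=I'(u)$ in $X^{*}$. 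This gives $\partial I(u)=\{I'(u)\}$.

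The main obstacle is really only bookkeeping in the first step: one must check that the mean value theorem applies uniformly enough in the two-parameter limit $(h,\delta)\to(0,0^{+})$ implicit in the $\limsup$ defining $I^{\circ}$, which is handled by continuity of $I'$ on a neighborhood of $u$. Beyond that the argument is mechanical, and no further tools from the paper are needed.
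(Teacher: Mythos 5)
Your proof is correct, and it is the standard argument: the paper itself states this lemma without proof, deferring to the cited references (Chang, Clarke), where exactly your two steps appear --- the mean value theorem plus continuity of $I'$ to get $I^{\circ}(u;v)=\langle I'(u),v\rangle$, and then the $v\mapsto -v$ symmetry to collapse $\partial I(u)$ to the singleton $\{I'(u)\}$. One small stylistic point: your first step actually shows that the full limit of the difference quotient exists and equals $\langle I'(u),v\rangle$ (since $\|h+\theta\delta v\|\le\|h\|+\delta\|v\|\to 0$ independently of $\theta$), so there is no need to phrase the reverse inequality as an argument ``along a sequence where equality is achieved''; the $\limsup$ is simply a limit here.
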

\begin{lemma}\label{35}
	If $Q \in C^{1}(X, \mathbb{R})$ and $\Psi \in Lip_{loc}(X, \mathbb{R})$, then for each $u \in X$
	$$
	\partial(Q+\Psi)(u)= Q'(u) + \partial \Psi(u).
	$$
\end{lemma}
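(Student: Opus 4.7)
The plan is to reduce the set identity to an identity about generalized directional derivatives and then read off both inclusions from the definition of the generalized gradient.

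First, I note that since $Q\in C^1(X,\mathbb{R})$, $Q$ is in particular locally Lipschitz (the mean value theorem combined with the local boundedness of $Q'$ on a small ball around any $u$ gives this at once). Hence $Q+\Psi$ is locally Lipschitz, so $\partial(Q+\Psi)(u)$ is defined. Since the previous lemma already gives $\partial Q(u)=\{Q'(u)\}$, it suffices to establish the sum identity
\begin{equation*}
(Q+\Psi)^{\circ}(u;v)=\langle Q'(u),v\rangle+\Psi^{\circ}(u;v)\quad\text{for all }v\in X,
\end{equation*}
because then a functional $\xi\in X^{*}$ satisfies $\langle\xi,v\rangle\le (Q+\Psi)^{\circ}(u;v)$ for every $v$ if and only if $\langle\xi-Q'(u),v\rangle\le\Psi^{\circ}(u;v)$ for every $v$, which is exactly the statement $\xi-Q'(u)\in\partial\Psi(u)$, i.e.\ $\xi\in Q'(u)+\partial\Psi(u)$.

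The technical step, and where I expect to spend the most effort, is proving that the generalized directional derivative of $Q$ coincides with the classical one, namely
\begin{equation*}
Q^{\circ}(u;v)=\langle Q'(u),v\rangle.
\end{equation*}
I would estimate the difference quotient using the fundamental theorem of calculus: for small $\delta>0$ and small $h\in X$,
\begin{equation*}
\frac{1}{\delta}\bigl(Q(u+h+\delta v)-Q(u+h)\bigr)=\int_{0}^{1}\bigl\langle Q'(u+h+t\delta v),v\bigr\rangle\,dt.
\end{equation*}
Because $Q'$ is continuous at $u$, given $\varepsilon>0$ I can choose a neighborhood of $u$ on which $\|Q'(\cdot)-Q'(u)\|_{*}<\varepsilon$; taking $h$ small and $\delta$ small forces $u+h+t\delta v$ into this neighborhood uniformly in $t\in[0,1]$, so the integral is within $\varepsilon\|v\|_{X}$ of $\langle Q'(u),v\rangle$. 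Taking $\limsup$ as $h\to 0,\delta\downarrow 0$ yields $Q^{\circ}(u;v)=\langle Q'(u),v\rangle$, which is simultaneously a genuine limit (not just a limsup). Once this is in hand, the sum rule for $(Q+\Psi)^{\circ}$ follows from the elementary fact that $\limsup(a_{n}+b_{n})=\lim a_{n}+\limsup b_{n}$ whenever $a_{n}$ converges, applied to the difference quotients of $Q$ and $\Psi$.

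Combining the two ingredients, one inclusion is immediate: every $\xi\in Q'(u)+\partial\Psi(u)$ satisfies $\langle\xi,v\rangle=\langle Q'(u),v\rangle+\langle\xi-Q'(u),v\rangle\le\langle Q'(u),v\rangle+\Psi^{\circ}(u;v)=(Q+\Psi)^{\circ}(u;v)$, so $\xi\in\partial(Q+\Psi)(u)$. Conversely, if $\xi\in\partial(Q+\Psi)(u)$, the linearity of $v\mapsto\langle Q'(u),v\rangle$ and the identity above let me subtract $Q'(u)$ cleanly and conclude $\xi-Q'(u)\in\partial\Psi(u)$. Hence $\partial(Q+\Psi)(u)=Q'(u)+\partial\Psi(u)$, as claimed. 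The only delicate point is the uniform control in the double limit defining $Q^{\circ}$, which is where the continuity (not merely existence) of $Q'$ is essential.
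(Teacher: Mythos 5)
Your proof is correct and is exactly the classical argument (as in Clarke and Chang): show that $Q^{\circ}(u;v)=\langle Q'(u),v\rangle$ as a genuine limit via the fundamental theorem of calculus and the continuity of $Q'$, split the $\limsup$ accordingly, and read off both inclusions from the definition of $\partial$. The paper states this lemma without proof, citing the standard references, and your argument is the one those references give, so there is nothing to add.
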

Moreover, we denote by $\lambda_{I}(u)$ the following real number 
$$
\lambda_{I}(u):=\min\{||\xi||_{*}: \xi \in \partial I(u)\}.
$$
We recall that $u \in X$ is a critical point of $I$ if $0 \in \partial I(u)$, or equivalently, when $\lambda_I(u)=0$. 

%\begin{lemma}\label{19}
%	Assume $\phi \in C^{1}([0,1], X)$ and $ I \in  Lip_{loc}(X, \mathbb{R})$. Then, the function $\Upsilon:= I \circ \phi:[0,1]\rightarrow \mathbb{R}$ is differentiable  a.e in $[0,1]$ and
%	$$
%	\Upsilon'(t) \leq \max\{\left<\xi, \phi'(t)\right>; \xi \in \partial I (\phi(t))\}	\quad \mbox{a.e. in} \quad  [0,1].
%	$$
%\end{lemma}
%\begin{theorem}\label{Leboug}\textbf{(Lebourg)}.
%	Let $x$ and $y$ be points in $X$, and suppose that $I$ is Lipschitz on an open set
%	containing the line segment $[x, y]$. Then there exists a point $u$ in $(x,y)$ such that
%	\begin{equation*}
%		I(y)-I(x)=\left<\xi, y-x\right>
%	\end{equation*}
%	for some $\xi \in \partial I(u)$.
%\end{theorem}
%
%\begin{theorem}\label{Chain-Rule}
%	Let $I: Y\rightarrow \mathbb{R}$ be Lipschitz near $x$, and suppose the space $X$ is continuously imbedded in $Y$, is dense in $Y$, and contains the point $x$. Then the restriction $J$ of $I$ to $X$ is Lipschitz near $x$, and $\partial J(x)=\partial I(x)$, in the sense that every element $z$ of $\partial J(x)$ admits a unique extension to an element of $\partial I(x)$.
%\end{theorem}

\section{On the energy functional of problem $(P_{\gamma}^{*})$}

It follows  that the functional
$$I_{\gamma}(u)= \frac{1}{2}\int_{\mathbb{R}^{N}}(|\nabla u|^{2}+V(x)u^{2})-\int_{\mathbb{R}^{N}}F_{\gamma}(x,u), \;u \in H^{1}(\mathbb{R}^{N}),$$
where $F_{\gamma}(x,t)= \int_{0}^{t}f_{\gamma}(x,s)ds$, is well defined. 

By standard argument, $Q \in C^{1}(H^{1}(\mathbb{R}^{N}), \mathbb{R})$
where
$$ Q(u)= \frac{1}{2}\int_{\mathbb{R}^{N}}(|\nabla u|^{2}+V(x)u^{2})$$
and 
$$Q'(u)v=\int_{\mathbb{R}^{N}}(\nabla u \nabla v+V(x)u v),\;\forall\; u,v \in H^{1}(\mathbb{R}^{N}).$$
In addition, we can write  
$$I_{\gamma}= Q- \Psi_{\gamma},$$
where $\Psi_{\gamma}: H^{1}(\mathbb{R}^{N})\rightarrow \mathbb{R}$ given by
\begin{equation*} 
\Psi_{\gamma}(u)=\int_{\mathbb{R}^{N}}F_{\gamma}(x,u).
\end{equation*}
By $(V_{1})$, it is well known that $H^{1}(\mathbb{R}^{N})=X^{+}\oplus X^{-}$ is a orthogonal decomposition and there is an equivalent norm $||\cdot||$ to $||\cdot||_{H^{1}(\mathbb{R}^{N})}$ (see \cite{AG, AlvesPatricio}) such that
\begin{equation}\label{Functional}
I_{\gamma}(u)= \frac{1}{2}||u^{+}||^{2}-\frac{1}{2}||u^{-}||^{2}-\Psi_{\gamma}(u),\;\forall\; u=u^{+}+u^{-} \in X^{+} \oplus X^{-} .
\end{equation}
By definition
\begin{equation}\label{Grad.}
\partial_{t} F_{\gamma}(x,t)= \{\mu \in \mathbb{R}\;:\; F_{\gamma}^{\circ}(x,t;r) \geq \mu r,\; r \in \mathbb{R}\},
\end{equation}
where $F_{\gamma}^{\circ}(x,t;r)$ denotes the generalized directional derivative of $t\mapsto F_{\gamma}(x,t)$ in the direction
of $r$, i.e,
\begin{equation*}
F_{\gamma}^{\circ}(x,t;r)= \limsup_{h\rightarrow t, \lambda \downarrow 0} \frac{F_{\gamma}(x, h+\lambda r)-F_{\gamma}(x,h)}{\lambda}.
\end{equation*}
Consider
\begin{eqnarray}
F_{\gamma}:\mathbb{R}^{N}&\times& \mathbb{R}\longrightarrow \mathbb{R} \nonumber \\
&(x,t)&\longmapsto F_{\gamma}(x,t) = \int_{0}^{t}f_{\gamma}(x,s) ds \nonumber
\end{eqnarray}
where $f_{\gamma}(x,s)= \gamma H(|s|-a)|s|^{q-2}s+|s|^{2^{*}-2}s$, that is, 
$$
F_{\gamma}(x,t)= \left\{\begin{aligned}
\frac{1}{2^{*}}|t|^{2^{*}},\;\;\mbox{if}\;\; |t|\leq a \\
\frac{1}{2^{*}}|t|^{2^{*}}+\frac{\gamma}{q}|t|^{q}- \frac{\gamma}{q}a^{q},\;\; \mbox{if}\;\; |t|>a
\end{aligned}
\right. 
$$
In fact it is easy to check that functional $\Psi_{\gamma}: H^{1}(\mathbb{R}^{N})\rightarrow \mathbb{R}$ given by
\begin{equation} \label{Psi}
\Psi_{\gamma}(u)=\int_{\mathbb{R}^{N}}F_{\gamma}(x,u),
\end{equation}
is well defined. However, in order to apply variational methods it is better to consider the functional $\Psi$ in a more appropriated domain, that is, $\Psi_{\gamma}: L^{\Phi}(\mathbb{R}^{N})\rightarrow \mathbb{R}$, for $\Phi(t)=|t|^{q}+|t|^{2^{*}}$, where $ L^{\Phi}(\mathbb{R}^{N})$ denotes the Orlicz space associated with the $N$-function $\Phi$ (for more details on Orlicz space see \cite{Fukagai 1, RAO, T.K}). 

Since $\Phi$ satisfies $\Delta_{2}$-condition, we can guarantee that given $J \in (L^{\Phi}(\mathbb{R}^{N}))^{*}$, then
$$J(u)= \int_{\mathbb{R}^{N}} vu\;,\;\;\forall\; u \in L^{\Phi}(\mathbb{R}^{N}),$$
for some $v \in L^{\tilde{\Phi}}(\mathbb{R}^{N})$, where $\tilde{\Phi}$ is the conjugate function of $\Phi$. In general, we need to prove that the inclusion below holds
$$\partial \Psi_{\gamma} (u)\subset \partial_{t} F_{\gamma}(x,u)= [\underline{f_{\gamma}}(x, u(x)), \overline{f_{\gamma}}(x, u(x))]\;\;\mbox{a.e in}\;\; \mathbb{R}^{N},$$
where
$$\underline{f_{\gamma}}(x,t)= \lim_{r\downarrow 0} ess \inf\{f_{\gamma}(x,s); |s-t|<r\}$$
and
$$\overline{f_{\gamma}}(x,t)= \lim_{r\downarrow 0} ess \sup\{f_{\gamma}(x,s); |s-t|<r\}.$$
We have that the condition below is satisfied:
\begin{itemize}
	\item [($F_{*}$)]There exist $C_0, C_1>0$ such that 
	$$|\xi| \leq C_0(|u|^{q-1} +|u|^{2^{*}-1}) \leq C_1\Phi'(|u|), \;\;\forall\; \xi \in \partial_{t} F_{\gamma}(x,u),\;\forall\; x \in \mathbb{R}^{N},$$
	for some $C_1>0$.
\end{itemize}

The next three results establish important properties of the functional $\Psi_{\gamma}$ given in (\ref{Psi}).   

\begin{lemma}\label{Lip}[See \cite{Alves1} or \cite{AG1}]
	Assume ($F_{*}$). Then, the functional $\Psi_{\gamma}: L^{\Phi}(\mathbb{R}^{N})\rightarrow \mathbb{R}$ given by
	$$\Psi_{\gamma}(u)= \int_{\mathbb{R}^{N}} F_{\gamma}(x,u),\; u \in L^{\Phi}(\mathbb{R}^{N}),$$
	is well defined and $\Psi_{\gamma} \in Lip_{loc}(L^{\Phi}(\mathbb{R}^{N}), \mathbb{R})$.
\end{lemma}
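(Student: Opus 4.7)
The plan is to deduce both conclusions from the growth control $(F_{*})$ together with standard estimates in the Orlicz space $L^{\Phi}(\mathbb{R}^{N})$, using crucially that $\Phi$ satisfies the $\Delta_{2}$-condition (and, since $q,2^{*}>1$, so does the conjugate $\tilde{\Phi}$).

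For well-definedness, I would first note that integrating $(F_{*})$ along the segment from $0$ to $t$ (or reading the explicit piecewise formula for $F_{\gamma}$) yields the pointwise majorization
\begin{equation*}
|F_{\gamma}(x,t)|\leq C\bigl(|t|^{q}+|t|^{2^{*}}\bigr)=C\,\Phi(|t|),\qquad x\in\mathbb{R}^{N},\ t\in\mathbb{R}.
\end{equation*}
By $\Delta_{2}$, membership $u\in L^{\Phi}(\mathbb{R}^{N})$ is equivalent to $\int_{\mathbb{R}^{N}}\Phi(|u|)<\infty$, so $\Psi_{\gamma}(u)$ is a finite real number for every $u\in L^{\Phi}(\mathbb{R}^{N})$.

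For the locally Lipschitz property, I would fix $u\in L^{\Phi}(\mathbb{R}^{N})$, take $V=B_{r}(u)\subset L^{\Phi}(\mathbb{R}^{N})$ for a small $r>0$, and consider arbitrary $v_{1},v_{2}\in V$. Since $t\mapsto F_{\gamma}(x,t)$ is locally Lipschitz, Lebourg's mean value theorem furnishes, for a.e.\ $x\in\mathbb{R}^{N}$, a point $w(x)$ on the segment joining $v_{1}(x)$ and $v_{2}(x)$ and an element $\xi(x)\in\partial_{t}F_{\gamma}(x,w(x))$ with
\begin{equation*}
F_{\gamma}(x,v_{2}(x))-F_{\gamma}(x,v_{1}(x))=\xi(x)\bigl(v_{2}(x)-v_{1}(x)\bigr).
\end{equation*}
Applying $(F_{*})$ gives $|\xi(x)|\leq C_{1}\Phi'(|w(x)|)$, whence H\"older's inequality in Orlicz spaces yields
\begin{equation*}
|\Psi_{\gamma}(v_{2})-\Psi_{\gamma}(v_{1})|\leq 2C_{1}\,\|\Phi'(|w|)\|_{\tilde{\Phi}}\,\|v_{2}-v_{1}\|_{\Phi}.
\end{equation*}

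It then remains to bound $\|\Phi'(|w|)\|_{\tilde{\Phi}}$ uniformly for $v_{1},v_{2}\in V$. Using the classical inequality $\tilde{\Phi}(\Phi'(t))\leq (K-1)\Phi(t)$ available whenever $\Phi$ satisfies $\Delta_{2}$ with constant $K$, together with $|w|\leq|v_{1}|+|v_{2}|$ and $\|v_{i}\|_{\Phi}\leq\|u\|_{\Phi}+r$, one translates a modular estimate for $\int \tilde{\Phi}(\Phi'(|w|))$ into a Luxemburg-norm bound for $\Phi'(|w|)$ depending only on $u$ and $r$. This provides the Lipschitz constant $K=K(u)>0$ on $V$. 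The main technical point is precisely this last step: converting the modular bound (which follows routinely from $(F_{*})$ and $\Delta_{2}$) into the Luxemburg-norm bound is what really uses the Orlicz framework rather than a plain $L^{p}$ argument, and is the reason one should work in $L^{\Phi}$ rather than in $H^{1}(\mathbb{R}^{N})$ or a product of Lebesgue spaces.
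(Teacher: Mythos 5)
The paper does not prove Lemma \ref{Lip} at all: it simply defers to \cite{Alves1} and \cite{AG1}, where the argument is precisely the one you give (pointwise growth bound $|F_{\gamma}(x,t)|\leq C\Phi(|t|)$ for well-definedness, then Lebourg's mean value theorem, the Orlicz--H\"older inequality, and the $\Delta_{2}$-based inequality $\tilde{\Phi}(\Phi'(t))\leq (K-1)\Phi(t)$ to turn the modular bound into a Luxemburg-norm bound). Your proposal is correct and follows essentially the same route; the only cosmetic point is that, to avoid measurability questions about the intermediate point $w(x)$, one should pass immediately to the measurable majorant $\Phi'(|v_{1}|+|v_{2}|)$ before applying H\"older, which you in effect do.
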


\begin{theorem}[See \cite{Chang1}, Theorem 2.1  or \cite{Alves}, Theorem 4.1]\label{72}
	Assume ($F_{*}$), then for each $u \in L^{\Phi}(\mathbb{R}^{N})$,
	\begin{equation}\label{67}
	\partial \Psi_{\gamma} (u) \subset \partial_{t} F_{\gamma}(x, u) = [\underline{f_{\gamma}}(x,u(x)), \overline{f_{\gamma}}(x,u(x))]\;\;\mbox{a.e in}\;\; \mathbb{R}^{N}.
	\end{equation}
\end{theorem}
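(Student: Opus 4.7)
The plan is to follow the classical Chang-type argument that identifies the generalized gradient of an integral functional with the pointwise generalized gradient of its integrand. I would start with an arbitrary $\xi \in \partial \Psi_\gamma(u)$ and use the observation already made in the paper: because $\Phi$ satisfies the $\Delta_2$-condition, $(L^\Phi(\mathbb{R}^N))^*$ is isometric to $L^{\tilde\Phi}(\mathbb{R}^N)$, so there exists $v \in L^{\tilde\Phi}(\mathbb{R}^N)$ with $\langle \xi,\varphi\rangle = \int_{\mathbb{R}^N} v\varphi$ for all $\varphi \in L^\Phi(\mathbb{R}^N)$. The goal then becomes to show that $v(x) \in [\underline{f_\gamma}(x,u(x)),\overline{f_\gamma}(x,u(x))]$ for almost every $x$.

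The core step is the pointwise domination of the generalized directional derivative:
\begin{equation*}
\Psi_\gamma^\circ(u;\varphi) \leq \int_{\mathbb{R}^N} F_\gamma^\circ(x,u(x);\varphi(x))\,dx, \qquad \forall \varphi \in L^\Phi(\mathbb{R}^N).
\end{equation*}
I would prove this using the definition of $\Psi_\gamma^\circ$ as a $\limsup$ of difference quotients and a reverse Fatou lemma. The growth condition $(F_*)$ guarantees that the difference quotients $[F_\gamma(x,u(x)+h(x)+\lambda \varphi(x)) - F_\gamma(x,u(x)+h(x))]/\lambda$ are dominated by an $L^1$ function of the form $C(\Phi'(|u|) + \Phi'(|h|) + \Phi'(|\varphi|))|\varphi|$, which by Young's inequality for $N$-functions lies in $L^1(\mathbb{R}^N)$ uniformly for $\|h\|_\Phi$ and $\lambda$ small. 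Reverse Fatou then gives the inequality.

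Next, I would combine the two inequalities: for every $\varphi \in L^\Phi(\mathbb{R}^N)$,
\begin{equation*}
\int_{\mathbb{R}^N} v(x)\varphi(x)\,dx = \langle \xi,\varphi\rangle \leq \Psi_\gamma^\circ(u;\varphi) \leq \int_{\mathbb{R}^N} F_\gamma^\circ(x,u(x);\varphi(x))\,dx.
\end{equation*}
For the scalar map $t \mapsto F_\gamma(x,t)$, the generalized gradient equals $[\underline{f_\gamma}(x,u(x)),\overline{f_\gamma}(x,u(x))]$, so $F_\gamma^\circ(x,u(x);r) = \max\{\overline{f_\gamma}(x,u(x))r,\underline{f_\gamma}(x,u(x))r\}$. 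Testing with $\varphi = \chi_E$ for arbitrary measurable $E \subset \mathbb{R}^N$ of finite measure (and bounded so that $\chi_E \in L^\Phi$) yields $\int_E v \leq \int_E \overline{f_\gamma}(x,u(x))$; testing with $\varphi = -\chi_E$ yields $\int_E v \geq \int_E \underline{f_\gamma}(x,u(x))$. Since $E$ is arbitrary, the pointwise inclusion $\underline{f_\gamma}(x,u(x)) \leq v(x) \leq \overline{f_\gamma}(x,u(x))$ holds a.e., proving the inclusion (\ref{67}).

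The main obstacle is the reverse Fatou step in establishing the domination of $\Psi_\gamma^\circ(u;\varphi)$ by the pointwise generalized derivative: one must build an integrable majorant for the difference quotients that is uniform as both the shift parameter $h$ (in $L^\Phi$) and the scalar $\lambda > 0$ tend to zero. This is exactly where $(F_*)$ and the $\Delta_2$-condition on $\Phi$ interact to give an $L^\Phi$–$L^{\tilde\Phi}$ duality bound that stays uniform under the relevant perturbations; the Orlicz framework (rather than a plain $L^p$ setting) is essential because $F_\gamma$ mixes the exponents $q$ and $2^*$.
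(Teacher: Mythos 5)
Your argument is correct and is precisely the classical Chang--Aubin--Clarke route: Orlicz duality to represent $\xi$ as an $L^{\tilde\Phi}$ function, the domination $\Psi_\gamma^\circ(u;\varphi)\le\int_{\mathbb{R}^N}F_\gamma^\circ(x,u(x);\varphi(x))\,dx$ via an integrable majorant supplied by $(F_*)$ and reverse Fatou, and then testing with $\pm\chi_E$ to localize. The paper does not prove this theorem but quotes it from Chang (Theorem 2.1) and Alves--Gon\c calves--Santos (Theorem 4.1), whose proofs follow exactly the steps you describe, so there is nothing to add beyond noting that your reconstruction matches the cited argument.
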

The inclusion above means that given $\xi \in  \partial \Psi_{\gamma} (u) \subset (L^{\Phi}(\mathbb{R}^{N}))^{*}\approx L^{\tilde{\Phi}}(\mathbb{R}^{N})$, there is $\tilde{\xi} \in L^{\tilde{\Phi}}(\mathbb{R}^{N})$ such that  
\begin{itemize}
	\item $\left<\xi, v \right>= \int_{\mathbb{R}^{N}} \tilde{\xi}v ,\;\forall\; v \in L^{\Phi}(\mathbb{R}^{N})$,
	\item $\tilde{\xi}(x) \in \partial_{t} F_{\gamma}(x, u(x)) = [\underline{f_{\gamma}}(x,u(x)), \overline{f_{\gamma}}(x,u(x))]\;\;\mbox{a.e in}\;\; \mathbb{R}^{N}.$
\end{itemize}
The following proposition is very important to establish the existence of a critical point for the functional $I_{\gamma}$.
\begin{proposition}(See \cite{AG}).\label{7}
	If $(u_{n}) \subset H^{1}(\mathbb{R}^{N})$  is such that 
	$u_{n}\rightharpoonup u_{0}$ in $H^{1}(\mathbb{R}^{N})$ and $\rho_{n} \in \partial \Psi_{\gamma}(u_{n})$ satisfies $\rho_{n} \stackrel{\ast}{\rightharpoonup} \rho_{0}$ in $(H^{1}(\mathbb{R}^{N}))^{*}$, then $\rho_{0} \in \partial \Psi_{\gamma}(u_{0})$. 
\end{proposition}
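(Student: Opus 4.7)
The plan is to exploit the integral representation provided by Theorem \ref{72}, pass to a weak limit in the dual Orlicz space, and recover the pointwise gradient membership via a Mazur-lemma argument combined with the upper semicontinuity of the multifunction $t\mapsto \partial_t F_\gamma(x,t)$.

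First I would extract, for each $n$, an $L^{\tilde\Phi}$-representative $\tilde\rho_n$ of $\rho_n$ via Theorem \ref{72}, so that $\langle\rho_n,v\rangle=\int_{\mathbb{R}^N}\tilde\rho_n v$ for $v\in L^\Phi(\mathbb{R}^N)$ and $\tilde\rho_n(x)\in[\underline{f_\gamma}(x,u_n(x)),\overline{f_\gamma}(x,u_n(x))]$ a.e. Condition $(F_*)$ bounds $|\tilde\rho_n|$ by $C_1\Phi'(|u_n|)$, and the weak convergence $u_n\rightharpoonup u_0$ in $H^1(\mathbb{R}^N)$ together with the continuous embedding $H^1(\mathbb{R}^N)\hookrightarrow L^\Phi(\mathbb{R}^N)$ yields uniform boundedness of $(u_n)$ in $L^\Phi$. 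Standard Orlicz estimates then give $(\Phi'(|u_n|))$ bounded in $L^{\tilde\Phi}$, and by reflexivity of $L^{\tilde\Phi}$ (which follows from the $\Delta_2$-condition on $\Phi$) I may pass to a subsequence with $\tilde\rho_n\rightharpoonup \tilde\rho_0$ in $L^{\tilde\Phi}$. Testing against $v\in C_c^\infty(\mathbb{R}^N)$, which is dense in $H^1$ and embeds into $L^\Phi$, the identification $\langle\rho_0,v\rangle=\int\tilde\rho_0 v$ holds, so $\rho_0$ is represented by $\tilde\rho_0$.

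Next, passing to a further subsequence so that $u_n(x)\to u_0(x)$ a.e., I would invoke Mazur's lemma to produce convex combinations $\xi_m=\sum_{n=m}^{N(m)}\lambda_{m,n}\tilde\rho_n$ with $\xi_m\to \tilde\rho_0$ strongly in $L^{\tilde\Phi}$, hence a.e.\ after a subsequence. Fix a point $x$ with $u_n(x)\to u_0(x)$. The multifunction $t\mapsto [\underline{f_\gamma}(x,t),\overline{f_\gamma}(x,t)]$ is upper semicontinuous with closed convex values: given $\varepsilon>0$ there is $n_0$ with $\tilde\rho_n(x)\in [\underline{f_\gamma}(x,u_0(x))-\varepsilon,\overline{f_\gamma}(x,u_0(x))+\varepsilon]$ for $n\geq n_0$, and convexity of this interval forces the same bound on each $\xi_m(x)$ for $m\geq n_0$. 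Letting $m\to\infty$ and then $\varepsilon\downarrow 0$ yields the pointwise conclusion $\tilde\rho_0(x)\in [\underline{f_\gamma}(x,u_0(x)),\overline{f_\gamma}(x,u_0(x))]$ a.e.

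Finally, I would promote this pointwise selection to the statement $\tilde\rho_0\in\partial\Psi_\gamma(u_0)$. For every $v\in L^\Phi(\mathbb{R}^N)$ the one-dimensional identity
\begin{equation*}
F_\gamma^\circ(x,u_0(x);v(x))=\max\{\xi\,v(x):\xi\in[\underline{f_\gamma}(x,u_0(x)),\overline{f_\gamma}(x,u_0(x))]\}
\end{equation*}
gives $\tilde\rho_0(x)v(x)\leq F_\gamma^\circ(x,u_0(x);v(x))$ a.e. Integrating and applying the Aubin--Clarke integration formula, justified in the present Orlicz setting by $(F_*)$ and Lemma \ref{Lip}, delivers $\int\tilde\rho_0 v\leq \Psi_\gamma^\circ(u_0;v)$, which is exactly $\tilde\rho_0\in\partial\Psi_\gamma(u_0)$, and hence $\rho_0\in\partial\Psi_\gamma(u_0)$. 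The main obstacle is precisely the Aubin--Clarke step: weak convergence in $H^1$ is not strong enough to pass $\Psi_\gamma^\circ$ to the limit directly, so one must detour through the pointwise selection theorem above, and then verify that the Orlicz growth $(F_*)$ supplies the measurability and integrability needed to interchange the Clarke derivative with the integral.
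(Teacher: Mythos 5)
The paper does not prove Proposition \ref{7} at all; it simply cites \cite{AG}, so there is no internal proof to compare against. Your overall architecture is the standard one for this kind of closedness result and matches what one finds in that reference: represent each $\rho_n$ by $\tilde\rho_n\in L^{\tilde\Phi}$ via Theorem \ref{72}, use $(F_*)$ and the boundedness of $(u_n)$ in $H^1\hookrightarrow L^\Phi$ to get $(\tilde\rho_n)$ bounded in the reflexive space $L^{\tilde\Phi}$, identify the weak limit with $\rho_0$ by testing against $C_c^\infty$, and then combine a.e.\ convergence $u_n\to u_0$ (from local compactness), Mazur's lemma applied to tails, and the semicontinuity of $t\mapsto[\underline{f_\gamma}(x,t),\overline{f_\gamma}(x,t)]$ to obtain the pointwise selection $\tilde\rho_0(x)\in\partial_tF_\gamma(x,u_0(x))$ a.e. All of that is sound.

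The gap is in your last step, and you have misattributed what makes it work. From the pointwise selection you correctly get $\int\tilde\rho_0 v\le\int F_\gamma^\circ(x,u_0;v)$, but to conclude $\tilde\rho_0\in\partial\Psi_\gamma(u_0)$ you need $\int F_\gamma^\circ(x,u_0(x);v(x))\,dx\le\Psi_\gamma^\circ(u_0;v)$, which is the \emph{reverse} of the Aubin--Clarke inequality $\Psi_\gamma^\circ(u;v)\le\int F_\gamma^\circ(x,u;v)$. Neither $(F_*)$ nor Lemma \ref{Lip} supplies this: growth and local Lipschitzness only give Theorem \ref{72}, i.e.\ the inclusion $\partial\Psi_\gamma(u)\subset\{\tilde\xi:\tilde\xi(x)\in\partial_tF_\gamma(x,u(x))\}$, and in general the opposite inclusion is false. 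What saves the argument here is the Clarke regularity of $F_\gamma(x,\cdot)$: because the Heaviside term makes $f_\gamma(x,\cdot)$ jump \emph{upward} at $|t|=a$ (from $a^{2^*-1}$ to $a^{2^*-1}+\gamma a^{q-1}$, and symmetrically at $-a$), the kinks of $F_\gamma(x,\cdot)$ are of convex type, so the one-sided derivative $F_\gamma'(x,t;r)$ exists and equals $F_\gamma^\circ(x,t;r)$ for every $t,r$. With regularity in hand, the difference quotients $\lambda^{-1}\bigl(F_\gamma(x,u_0+\lambda v)-F_\gamma(x,u_0)\bigr)$ are dominated by an $L^1$ function thanks to $(F_*)$, and dominated convergence gives $\int F_\gamma^\circ(x,u_0;v)=\int F_\gamma'(x,u_0;v)=\lim_{\lambda\downarrow0}\lambda^{-1}\bigl(\Psi_\gamma(u_0+\lambda v)-\Psi_\gamma(u_0)\bigr)\le\Psi_\gamma^\circ(u_0;v)$. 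You must state and verify this regularity explicitly (if the jump of $f_\gamma$ went downward, the whole last step would fail), after which your proof closes.
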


\section{Generalized linking theorem}	

	From now on, $X$ is a Hilbert space with $X=Y \oplus Z$, where $Y$ is a separable closed subspace of $X$ and $Z=Y^{\perp}$. If $u \in X$,  $u^{+}$ and $u^{-}$ denote the orthogonal projections from $X$ in $Z$ and in $Y$, respectively. In $X$ let us define the norm
	\begin{eqnarray}
	|||\cdot|||: &X &\longrightarrow \mathbb{R} \nonumber \\
	&u& \longmapsto |||u|||=\max\left\{||u^{+}||, \sum_{k=1}^{\infty}\frac{1}{2^{k}}|(u^{-},e_{k})|\right\}, \nonumber
	\end{eqnarray}
	where $(e_{k})$ is a total orthonormal sequence in $Y$. The topology on $X$ generated by $|||\cdot|||$ will be denoted by $\tau$ and all topological notions related to it will include this symbol.
		
Let $I :X \rightarrow \mathbb{R}$ a funtional locally Lipschitz, $I\in Lip_{loc}(X, \mathbb{R})$. We will say a functional $I:X\rightarrow \mathbb{R}$ verifies the condition $(H)$ when: \\
$$
(H) \quad \left\{
\begin{array}{l}
	\mbox{If $(u_{n}) \subset I^{-1}([\alpha, \beta])$ is such that $u_{n}\stackrel{\tau}{\rightarrow} u_{0}$ in $X$, then there exists $M>0$ }  \\
	\mbox{such that $\partial I(u_{n}) \subset B_{M}(0) \subset X^{*},\;\forall\; n \in \mathbb{N}$. In addition, if  $\xi_{n} \in \partial I(u_{n})$} \\
	\mbox{ with $\xi_{n} \stackrel{\ast}{\rightharpoonup} \xi_{0}$ in  $X^{*}$, we have $\xi_{0} \in \partial I(u_{0})$.}
\end{array}
\right\}
$$
\begin{theorem}\label{Link}(See \cite{AG}).
Let $Y$ be a separable closed subspace of a Hilbert space $X$ and $Z=Y^{\perp}$. If $u \in X$, as in the previous section, $u^{+}$ and $u^{-}$ denote the orthogonal projections in $Z$ and $Y$, respectively. 

Given $\rho >r>0$ and $z \in Z$ with $||z||=1$, we set 
\begin{eqnarray}
&&\mathcal{M}=\left\{u=y+t z\;; ||u||\leq \rho, t \geq 0 \;\;\mbox{and}\;\; y \in Y\right\} \nonumber \\
&& \mathcal{M}_{0}=\left\{u=y+t z\;;y \in Y, ||u||= \rho\;\;\mbox{and}\;\; t \geq 0 \;\;\mbox{or}\;\;  ||u||\leq  \rho\;\;\mbox{and}\;\; t=0 \right\} \nonumber \\
&& S=\left\{u \in Z\;; ||u||=r\right\}. \nonumber
\end{eqnarray} 
Assume $I \in Lip_{loc}(X, \mathbb{R})$ such that
\begin{equation*}
I \;\mbox{is}\;\;\tau-\mbox{upper semicontinuous}
\end{equation*}
and
\begin{equation*}
b=\inf_{S}I> \sup_{\mathcal{M}_{0}}I\;,\; d=\sup_{\mathcal{M}} I < \infty.
\end{equation*}
If $I$ verifies the condition $(H)$, there is $c \in [b,d]$ and a sequence $(u_{n}) \subset X$ such that
$$I(u_{n}) \rightarrow c\;\;\mbox{and}\;\; \lambda_{I}(u_{n})\rightarrow 0.$$	
\end{theorem}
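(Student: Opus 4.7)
I would follow the Kryszewski--Szulkin deformation strategy adapted to the locally Lipschitz setting (as in Alves--Patricio \cite{AG}), replacing the classical gradient by a $\tau$-continuous pseudo-gradient field built from selections of $\partial I$. Argue by contradiction: assume that for every $c\in[b,d]$ there is no Palais--Smale-type sequence of the asserted kind. A standard quantitative reformulation then yields $\varepsilon_0,\alpha>0$ such that $\lambda_I(u)\geq \alpha$ on a $\tau$-neighborhood $\mathcal{N}$ of $I^{-1}([b-\varepsilon_0,d+\varepsilon_0])$ meeting $\mathcal{M}$. This reduction uses exactly the lower semicontinuity of $u\mapsto \lambda_I(u)$ along $\tau$-convergent sequences, which is an immediate consequence of $(H)$ together with Proposition \ref{7}-style arguments.

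The heart of the argument is a pseudo-gradient construction in the $\tau$-topology. For each $u\in\mathcal{N}$ pick, via the Hahn--Banach separation, a vector $w(u)\in X$ with $\|w(u)\|\leq 1$ and $\langle \xi,w(u)\rangle\geq \alpha/2$ for every $\xi\in\partial I(u)$. Using that bounded subsets of $X$ are $\tau$-metrizable (since $(e_k)$ is countable) and that $(H)$ guarantees $\ast$-weak closedness of $\partial I$ along $\tau$-convergent sequences, a $\tau$-locally finite partition of unity glues these selections into a vector field $W:\mathcal{N}\to X$ that is locally Lipschitz in the norm and $\tau$-continuous on bounded sets. The solution $\eta(t,u)$ of $\dot\eta=-W(\eta)$, $\eta(0,u)=u$ then satisfies $\frac{d}{dt}I(\eta(t,u))\leq -\alpha/2$ as long as the trajectory stays in $\mathcal{N}$, and $u\mapsto \eta(t,u)$ is $\tau$-continuous on bounded $\tau$-sets. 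Choosing $T$ large enough, the map $h:=\eta(T,\cdot)$ sends every point of $\mathcal{M}$ into the sublevel set $\{I<b\}$.

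To obtain the contradiction, apply the Kryszewski--Szulkin linking/intersection principle: the $\tau$-continuity of $h$, boundedness of $\mathcal{M}$, and the structural constraint that $h$ can be written as identity plus a $\tau$-continuous compact perturbation in the $Y$-direction, allow the definition of a Brouwer degree for the map $\Phi(u)=(h(u)^{-},\|h(u)^{+}\|-r)$ on finite-dimensional approximations $Y_n\oplus\mathbb{R} z$. The hypotheses $b=\inf_S I>\sup_{\mathcal{M}_0}I$ and $\sup_\mathcal{M} I=d<\infty$ imply that $h$ coincides with the identity on $\mathcal{M}_0$, so the degree equals $1$ and forces $h(\mathcal{M})\cap S\neq \emptyset$; but on $S$ one has $I\geq b$, contradicting the descent along $\eta$. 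The main obstacle is precisely the $\tau$-continuous selection of the pseudo-gradient in Step~2: condition $(H)$ is tailored exactly to replace, in the nonsmooth strongly indefinite setting, the continuity of $I'$ used in the classical Kryszewski--Szulkin argument, and verifying that the partition-of-unity gluing preserves $\tau$-continuity on bounded sets is the delicate technical point.
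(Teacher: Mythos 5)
This theorem is not proved in the paper at all: it is imported verbatim from the reference \cite{AG} (Alves--Patricio), and the author's ``proof'' consists of the citation. So there is nothing in the present paper to measure your argument against; the relevant comparison is with the deformation-plus-degree proof in \cite{AG}, which generalizes Kryszewski--Szulkin to the locally Lipschitz setting. Your outline does follow that expected route: a quantitative negation giving $\lambda_I\geq\alpha$ on a neighborhood of the critical level set, a $\tau$-continuous pseudo-gradient field obtained by separating $0$ from the convex weak-$*$ compact sets $\partial I(u)$ and gluing with a $\tau$-locally finite partition of unity (this is precisely where condition $(H)$ and the $\tau$-upper semicontinuity of $I$ are consumed), a descent flow, and the Kryszewski--Szulkin intersection property via a degree on finite-dimensional slices $Y_n\oplus\mathbb{R}z$.

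That said, what you have written is a plan rather than a proof, and one step as stated is not correct. The time-$T$ flow map $h=\eta(T,\cdot)$ does \emph{not} coincide with the identity on $\mathcal{M}_{0}$: the vector field does not vanish there, so points of $\mathcal{M}_{0}$ move. The intersection lemma has to be formulated for admissible \emph{homotopies} $\eta(t,\cdot)$ satisfying $\eta(t,\mathcal{M}_{0})\cap S=\emptyset$ for all $t$, which holds here because $I$ decreases along the flow and $\sup_{\mathcal{M}_{0}}I<b=\inf_{S}I$; the degree computation is then done for the homotopy, not for a map fixing $\mathcal{M}_{0}$. The other genuinely delicate points you correctly flag but do not execute: (i) that the partition-of-unity gluing produces a field that is simultaneously locally Lipschitz in the norm and $\tau$-continuous on bounded sets (needed both for existence of the flow and for $\tau$-continuity of $\eta(t,\cdot)$, which the degree argument requires), and (ii) that the deformed sets retain the ``identity plus finite-dimensional-range perturbation'' structure that makes the Brouwer degree well defined. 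Without these carried out, the argument is a credible roadmap to the proof in \cite{AG} but not a self-contained proof.
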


\section{Proof of Theorem \ref{Teorema1}}
	
It follows, by Alves and Patricio \cite{AG}, that the functional $I_{\gamma}$ checks condition $(H)$ and the hypotheses of the Theorem \ref{Link}. Then, there is $c_{\gamma} \in [b_{\gamma},d_{\gamma}]$ and a sequence $(u_{n}) \subset H^{1}(\mathbb{R}^{N})$ bounded (see \cite[lemma 6.9]{AG}) such that
	\begin{equation*}
		I_{\gamma}(u_{n})\rightarrow c_{\gamma}\;\;\mbox{and}\;\; \lambda_{I_{\gamma}}(u_{n})\rightarrow 0.
	\end{equation*}

%	By Corollary \ref{68} and Remark \ref{Bounded}, there exists a bounded sequence $(u_{n}) \subset H^{1}(\mathbb{R}^{N})$ satisfying 
%	\begin{equation*}
%	I_{\gamma}(u_{n})\rightarrow c_{\gamma}\;\;\mbox{and}\;\; \lambda_{I_{\gamma}}(u_{n})\rightarrow 0.
%	\end{equation*}
	
	\begin{claim}
		There exists $\delta>0$ such that
		$$\liminf_{n} \sup_{y \in \mathbb{R}^{N}} \int_{B(y,1)} |u_{n}|^{2^{*}}\geq \delta.$$
	\end{claim}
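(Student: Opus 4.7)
The plan is to argue by contradiction. Suppose that, along a subsequence still labelled $(u_n)$,
\begin{equation*}
\sup_{y\in\mathbb{R}^{N}}\int_{B(y,1)}|u_n|^{2^{*}} \longrightarrow 0.
\end{equation*}
The goal is to show that this forces $c_\gamma\leq 0$, contradicting $c_\gamma\geq b_\gamma>0$. The positivity $b_\gamma>0$ is the usual link-geometry fact: on the sphere $S$ of small radius $r$ in $X^{+}$, Sobolev gives $\Psi_\gamma(u)\leq C_1 ||u||^{2^{*}}+C_2 ||u||^{q}$, so $I_\gamma(u)\geq \frac{r^{2}}{2}-C_1 r^{2^{*}}-C_2 r^{q}>0$ for $r$ small, since $q,2^{*}>2$.

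The first step is to extract a useful identity from the generalized Palais--Smale condition $\lambda_{I_\gamma}(u_n)\to 0$. By Lemma \ref{35} we may choose $\rho_n\in\partial\Psi_\gamma(u_n)$ with $||Q'(u_n)-\rho_n||_{*}\to 0$, and Theorem \ref{72} identifies $\rho_n$ with some $\widetilde{\rho}_n\in L^{\widetilde{\Phi}}(\mathbb{R}^{N})$ satisfying $\widetilde{\rho}_n(x)\in\partial_t F_\gamma(x,u_n(x))$ a.e. Testing against the bounded function $v=u_n$, admissible because $H^{1}(\mathbb{R}^{N})\hookrightarrow L^{\Phi}(\mathbb{R}^{N})$, and using the explicit description of $\partial_t F_\gamma$ in Remark \ref{Obs1} yields
\begin{equation*}
||u_n^{+}||^{2}-||u_n^{-}||^{2} = \int_{\mathbb{R}^{N}}|u_n|^{2^{*}} + \gamma\int_{\{|u_n|>a\}}|u_n|^{q} + o(1).
\end{equation*}
Combining this with $I_\gamma(u_n)\to c_\gamma$ and the explicit formula for $\Psi_\gamma(u_n)$, a direct computation produces
\begin{equation*}
c_\gamma + o(1) = \frac{1}{N}\int_{\mathbb{R}^{N}}|u_n|^{2^{*}} + \frac{\gamma(q-2)}{2q}\int_{\{|u_n|>a\}}|u_n|^{q} + \frac{\gamma a^{q}}{q}\bigl|\{|u_n|>a\}\bigr|,
\end{equation*}
where every term on the right is non-negative.

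The crucial ingredient is the interpolation inequality
\begin{equation*}
\int_{\mathbb{R}^{N}}|u|^{2^{*}} \leq C\Bigl(\sup_{y\in\mathbb{R}^{N}}\int_{B(y,1)}|u|^{2^{*}}\Bigr)^{2/N}||u||_{H^{1}}^{2}, \qquad u\in H^{1}(\mathbb{R}^{N}).
\end{equation*}
To prove it I tile $\mathbb{R}^{N}$ by unit cubes $\{Q_y\}_{y\in\mathbb{Z}^{N}}$; Sobolev embedding on each cube gives $||u||_{L^{2^{*}}(Q_y)}^{2}\leq C||u||_{H^{1}(Q_y)}^{2}$, hence
\begin{equation*}
\int_{Q_y}|u|^{2^{*}} = ||u||_{L^{2^{*}}(Q_y)}^{2^{*}-2}||u||_{L^{2^{*}}(Q_y)}^{2} \leq C\Bigl(\int_{Q_y}|u|^{2^{*}}\Bigr)^{2/N}||u||_{H^{1}(Q_y)}^{2},
\end{equation*}
using the identity $(2^{*}-2)/2^{*}=2/N$. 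Summing over $y$, factoring out the supremum, and covering each unit cube by finitely many unit balls gives the claimed inequality. Applied to $(u_n)$, the contradiction hypothesis together with the $H^{1}$-boundedness forces $\int_{\mathbb{R}^{N}}|u_n|^{2^{*}}\to 0$. For the remaining terms: if $a>0$, Markov's inequality and the bound $|u_n|^{q}\leq a^{q-2^{*}}|u_n|^{2^{*}}$ on $\{|u_n|>a\}$ show that both $|\{|u_n|>a\}|$ and $\int_{\{|u_n|>a\}}|u_n|^{q}$ tend to zero; if $a=0$, the last term of the identity is trivially zero, while H\"older on unit balls transfers the hypothesis into $L^{2}$-vanishing and the classical Lions lemma then delivers $u_n\to 0$ in $L^{q}(\mathbb{R}^{N})$. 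In every case the right-hand side of the identity tends to zero, so $c_\gamma\leq 0$, a contradiction. The main obstacle is the interpolation inequality above, whose cube-by-cube argument relies essentially on the exponent identity $(2^{*}-2)/2^{*}=2/N$ particular to the critical case; the rest of the proof is routine manipulation of the $(PS)$ condition plus a standard Lions-type vanishing argument.
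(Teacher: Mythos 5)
Your proposal is correct and follows essentially the same route as the paper: argue by contradiction, use the Lions-type vanishing lemma to conclude $u_n\to 0$ in $L^{2^{*}}(\mathbb{R}^{N})$ and in $L^{q}(\mathbb{R}^{N})$, and combine the Palais--Smale information with the structure of $F_{\gamma}$ to force $c_{\gamma}\leq 0$, contradicting $c_{\gamma}\geq b_{\gamma}>0$. The only differences are presentational: the paper invokes the vanishing step as a black box from \cite[Lemma 2.1]{Ramos} where you reprove it by tiling with unit cubes, and you expand the energy identity term by term where the paper uses the one-line bound $\int_{\mathbb{R}^{N}}\rho_{n}u_{n}\leq \gamma\|u_{n}\|_{q}^{q}+\|u_{n}\|_{2^{*}}^{2^{*}}$.
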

	If the claim is not true, we must have 
	$$\liminf_{n} \sup_{y \in \mathbb{R}^{N}} \int_{B(y,1)} |u_{n}|^{2^{*}}=0.$$
	Thus, applying \cite [Lemma 2.1]{Ramos},  $u_{n}\rightarrow 0$ in $L^{2^{*}}(\mathbb{R}^{N})$ and 
	by interpolation on the Lebesgue spaces, $u_{n}\rightarrow 0$ in $L^{s}(\mathbb{R}^{N})$ for $2<s< 2^{*}$.
	On the other hand,
	\begin{equation}\label{70}
0<c=I_{\gamma}(u_{n})- \frac{1}{2}\left<w_{n}, u_{n} \right>+o_{n}(1)=\left(\frac{1}{2}- \frac{1}{2^{*}}\right) \int_{\mathbb{R}^{N}} \rho_{n} u_{n} +o_{n}(1),
	\end{equation}
	where $w_{n}= Q'(u_{n})- \rho_{n}$ with $\lambda_{I_{\gamma}}(u_{n})=||w_{n}||_{*}$ and $\rho_{n} \in  \partial \Psi_{\gamma}(u_{n})$. 
	
	Since 
	$$\int_{\mathbb{R}^{N}} \rho_{n} u_{n} \leq \gamma ||u_{n}||_q^{q}+||u_{n}||_{2^{*}}^{2^{*}}\rightarrow 0,$$
	contrary to (\ref{70}).
	
	From this, going to a subsequence if necessary, there exists $n_{0} \in \mathbb{N}$ such that
	$$\sup_{y \in \mathbb{R}^{N}} \int_{B(y,1)} |u_{n}|^{2^{*}} \geq \frac{\delta}{2}, \;n \geq n_{0}.$$
	By definition of supreme, there exists $(y_{n}) \subset \mathbb{R}^{N}$ such that
	\begin{equation*}
	\int_{B(y_{n},1)} |u_{n}|^{2^{*}} \geq \frac{\delta}{4}, \;n \geq n_{0}.
	\end{equation*}
	Then, there exists $(z_{n}) \subset \mathbb{Z}^{N}$ such that
	\begin{equation*}
	\int_{B(z_{n},1+\sqrt{N})} |u_{n}|^{2^{*}} \geq \frac{\delta}{4}, \;n \geq n_{0}.
	\end{equation*}
	Setting $v_{n}(x)=u_{n}(x+z_{n})$, we compute 
	\begin{equation}\label{84}
	\int_{B(0,1+\sqrt{N})} |v_{n}(x)|^{2^{*}}= \int_{B(z_{n},1+\sqrt{N})} |u_{n}(x)|^{2^{*}} \geq \frac{\delta}{4},\;n \geq n_{0}.
	\end{equation}
Similarly to what was done in \cite[Claim 6.12]{AG}, we have that $(v_{n}) \subset H^{1}(\mathbb{R}^{N})$ is also a $(PS)_{c_{\gamma}}$ sequence for $I_{\gamma}$.
Going to a subsequence, if necessary, let $v \in H^{1}(\mathbb{R}^{N})$ the weak limit of the sequence $(v_{n}) \subset H^{1}(\mathbb{R}^{N})$.
	
\begin{claim}\label{103}
	If $c_{\gamma}<\dfrac{S^{\frac{N}{2}}}{N}$, then $v\neq 0$. 	
\end{claim}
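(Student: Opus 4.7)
To prove the claim I argue by contradiction: suppose $v=0$, and aim to derive the contradiction $c_{\gamma}\geq S^{N/2}/N$. Since $(v_{n})$ is bounded in $H^{1}(\mathbb{R}^{N})$, passing to a further subsequence I may assume $|v_{n}|^{2^{*}}\stackrel{\ast}{\rightharpoonup}\mu$ and $|\nabla v_{n}|^{2}\stackrel{\ast}{\rightharpoonup}\nu$ as positive Radon measures on $\mathbb{R}^{N}$, together with $v_{n}\to 0$ almost everywhere. The concentration--compactness principle of Lions then produces an at most countable set $J$, points $(x_{j})_{j\in J}\subset\mathbb{R}^{N}$ and positive numbers $(\mu_{j}),(\nu_{j})$ with
\[
\mu=\sum_{j\in J}\mu_{j}\delta_{x_{j}},\qquad \nu\geq\sum_{j\in J}\nu_{j}\delta_{x_{j}},\qquad \nu_{j}\geq S\mu_{j}^{2/2^{*}}.
\]
Estimate (\ref{84}) yields $\mu(\overline{B(0,1+\sqrt{N})})\geq \delta/4>0$, so at least one $j_{0}$ with $\mu_{j_{0}}>0$ exists.

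The first key step is to show $\nu_{j_{0}}=\mu_{j_{0}}$, whence $\mu_{j_{0}}\geq S^{N/2}$ by the Sobolev bound. I pick a smooth cutoff $\phi_{\varepsilon}$ with $\phi_{\varepsilon}\equiv 1$ on $B(x_{j_{0}},\varepsilon)$, supported in $B(x_{j_{0}},2\varepsilon)$, and $|\nabla\phi_{\varepsilon}|\leq C/\varepsilon$. Writing $w_{n}=Q'(v_{n})-\rho_{n}$ with $\|w_{n}\|_{*}=\lambda_{I_{\gamma}}(v_{n})\to 0$ and $\tilde{\rho}_{n}\in L^{\tilde{\Phi}}(\mathbb{R}^{N})$ the representative of $\rho_{n}$ from Theorem \ref{72}, I test $\langle w_{n},v_{n}\phi_{\varepsilon}\rangle\to 0$. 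Letting $n\to\infty$: the cross term $\int v_{n}\nabla v_{n}\cdot\nabla\phi_{\varepsilon}$ and the potential term $\int V v_{n}^{2}\phi_{\varepsilon}$ vanish by the local $L^{2}$--compactness of $v_{n}\rightharpoonup 0$, and the discontinuous subcritical piece $\gamma\int H_{e}(|v_{n}|-a)|v_{n}|^{q}\phi_{\varepsilon}$ vanishes because $v_{n}\to 0$ in $L^{q}_{\mathrm{loc}}$; the two surviving terms converge to $\int\phi_{\varepsilon}\,d\nu$ and $\int\phi_{\varepsilon}\,d\mu$. Sending $\varepsilon\to 0^{+}$ gives $\nu_{j_{0}}=\mu_{j_{0}}$, and combined with $\nu_{j_{0}}\geq S\mu_{j_{0}}^{2/2^{*}}$ this forces $\mu_{j_{0}}\geq S^{N/2}$.

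The second step turns the concentrated mass into a level lower bound. Testing $\langle w_{n},v_{n}\rangle\to 0$ directly yields $\|v_{n}^{+}\|^{2}-\|v_{n}^{-}\|^{2}=\int\tilde{\rho}_{n}v_{n}+o_{n}(1)$, which plugged into (\ref{Functional}) together with $I_{\gamma}(v_{n})\to c_{\gamma}$ gives
\[
c_{\gamma}=\lim_{n}\Bigl[\tfrac{1}{2}\int_{\mathbb{R}^{N}}\tilde{\rho}_{n}v_{n}-\int_{\mathbb{R}^{N}}F_{\gamma}(x,v_{n})\Bigr].
\]
A direct case split on $|v_{n}|\leq a$ versus $|v_{n}|>a$, using the explicit formulas for $F_{\gamma}$ and for the selections in $\partial_{t}F_{\gamma}$, yields the pointwise inequality
\[
\tfrac{1}{2}\tilde{\rho}_{n}(x)v_{n}(x)-F_{\gamma}(x,v_{n}(x))\geq \tfrac{1}{N}|v_{n}(x)|^{2^{*}}\quad\text{a.e. in }\mathbb{R}^{N},
\]
the extra subcritical contribution on $\{|v_{n}|>a\}$ being nonnegative since $q>2$ and $a\geq 0$. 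Consequently
\[
c_{\gamma}\geq\frac{1}{N}\liminf_{n}\int_{\mathbb{R}^{N}}|v_{n}|^{2^{*}}\geq\frac{\mu(\mathbb{R}^{N})}{N}\geq\frac{\mu_{j_{0}}}{N}\geq\frac{S^{N/2}}{N},
\]
contradicting $c_{\gamma}<S^{N/2}/N$, so $v\neq 0$.

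The main obstacle is the multivalued character of $\rho_{n}$ on the ambiguity set $\{|v_{n}|=a\}$, where $\tilde{\rho}_{n}$ is only constrained to the interval between the one-sided limits of $f_{\gamma}$; however, those endpoints differ only by the subcritical quantity $\gamma a^{q-1}$ and $|v_{n}|$ is constant there, so the ambiguous contribution is absorbed into an $L^{q}_{\mathrm{loc}}$--controlled quantity that vanishes in the limit, leaving both the cutoff test and the level identity intact. A secondary worry, that $L^{2^{*}}$--mass might escape to infinity and never enter the point--mass decomposition, is ruled out by (\ref{84}), which pins a fixed fraction $\delta/4$ of the mass inside a compact ball.
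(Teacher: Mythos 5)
Your proof is correct and follows essentially the same route as the paper: contradiction via $v=0$, Lions's concentration--compactness principle, the cutoff test $\langle w_n,\varphi v_n\rangle\to 0$ to force an atom of mass at least $S^{N/2}$, and the identity $c_\gamma+o_n(1)=I_\gamma(v_n)-\tfrac12\langle w_n,v_n\rangle\geq\tfrac1N\int|v_n|^{2^*}$ to contradict $c_\gamma<S^{N/2}/N$. The only (harmless) difference is organizational: you invoke (\ref{84}) at the outset to produce the atom directly from the purely atomic limit measure, whereas the paper first rules out all atoms and then uses (\ref{84}) to contradict the vanishing of the $L^{2^*}_{loc}$ mass.
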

Suppose by contradiction $v=0$ and assume that
$$|\nabla v_{n}|^{2} \rightharpoonup \mu\;\;\mbox{and}\;\;|v_{n}|^{2^{*}} \rightharpoonup \nu \;\;\mbox{in }\;\; \mathcal{M}(\mathbb{R}^{N}).$$ 
By Concentration-Compactness Principle II due to Lions \cite{Principio}, there exist a countable  set $J$, $\nu_{j} \in \mathbb{R}_{+}$ and $(x_{j})_{j} \subset \mathbb{R}^{N}$ such that
$$\nu= \sum_{j \in J} \nu_{j} \delta_{x_{j}}$$
where $\delta_{x}$ denotes the mass of Dirac concentrated in $x \in \mathbb{R}^{N}$. In addition, 
$$\mu \geq  S \sum_{j \in J} \nu_{j}^{\frac{2}{2^{*}}} \delta_{x_{j}},$$
where 
$$S= \inf \{|\nabla u|_{2}^{2}\;;\; u \in D^{1,2}(\mathbb{R}^{N})\;,\; |u|_{2^{*}}=1\}$$
is the best Sobolev constant for immersion of $D^{1,2}(\mathbb{R}^{N})$ in $L^{2^{*}}(\mathbb{R}^{N})$. We prove that $\nu_{j}=0$ for all $j \in J$. Indeed, otherwise, suppose there is $j_{0} \in J$ such that
		\begin{equation}\label{99}
		\nu_{j_{0}}>0.
		\end{equation}
For $\delta>0$ consider the function $\varphi_{\delta} \in C_{0}^{\infty}(\mathbb{R}^{N})$
		
		$
		\varphi_{\delta}(x) = \left\{\begin{aligned}
		1,\;\; |x-x_{j_{0}}|\leq \frac{\delta}{2} \\
		0,\;\; |x-x_{j_{0}}|>\delta.
		\end{aligned}
		\right. 
		$
	
		By definition of convergence in the sense of measure theory, we get
		\begin{itemize}
			\item $\displaystyle\int_{\mathbb{R}^{N}} \varphi_{\delta} |v_{n}|^{2^*}\rightarrow  \int_{\mathbb{R}^{N}} \varphi_{\delta} d\nu$, as $n\rightarrow +\infty$.
			\item $\displaystyle \int_{\mathbb{R}^{N}} \varphi_{\delta} |\nabla v_{n}|^{2}\rightarrow \int_{\mathbb{R}^{N}} \varphi_{\delta} d\mu$, as $n\rightarrow +\infty$. 
		\end{itemize}
		Since $(v_{n})_{n}$ is bounded in $L^{2^{*}}(\mathbb{R}^{N})$, then $(|v_{n}|^{q})_{n}$ is bounded in $L^{\frac{2^{*}}{q}}(\mathbb{R}^{N})$ where $q \in (2, 2^{*})$ and $v_{n}(x)\rightarrow 0$ a.e in $\mathbb{R}^{N}$. So, 
		$$|v_{n}|^{q}\rightharpoonup 0\;\; \mbox{in}\;\; L^{\frac{2^{*}}{q}}(\mathbb{R}^{N}),$$
		that is,
		\begin{equation}\label{78}
		\int_{\mathbb{R}^{N}} |v_{n}|^{q} \varphi_{\delta} \rightarrow 0.
		\end{equation}
		By the inequality of H\"older and the limitation of $(v_{n})_{n}$ in $H^{1}(\mathbb{R}^{N})$, we get
		\begin{eqnarray}
		\left|\int_{\mathbb{R}^{N}} v_{n} \nabla v_{n} \nabla \varphi_{\delta}\right| &\leq& \left(\int_{\mathbb{R}^{N}} |v_{n}|^{2}  |\nabla \varphi_{\delta}|^{2}\right)^{\frac{1}{2}} \left(\int_{\mathbb{R}^{N}}|\nabla v_{n}|^{2}\right)^{\frac{1}{2}} \nonumber \\
		&\leq& C\left(\int_{B_{\delta}(x_{j_{0}})} |v_{n}|^{2}  |\nabla \varphi_{\delta}|^{2}\right)^{\frac{1}{2}}\nonumber.
		\end{eqnarray}
		Once $v_{n}\rightarrow 0$ in $L_{loc}^{2}(\mathbb{R}^{N})$, we obtain
		$$\left(\int_{B_{\delta}(x_{j_{0}})} |v_{n}|^{2}  |\nabla \varphi_{\delta}|^{2}\right)^{\frac{1}{2}}\rightarrow 0,\;\mbox{as}\;\; n\rightarrow \infty,$$
		showing that
		\begin{equation}\label{79}
		\int_{\mathbb{R}^{N}} v_{n} \nabla v_{n} \nabla \varphi_{\delta}\rightarrow 0, \;\mbox{as}\; n\rightarrow +\infty.
		\end{equation}
		In addition,
		\begin{equation}\label{80}
		\int_{\mathbb{R}^{N}} V(x) \varphi_{\delta} |v_{n}|^{2}\rightarrow 0, \;\mbox{as}\; n\rightarrow +\infty.
		\end{equation}
		We still have
		\begin{equation}\label{81}
		\int_{\mathbb{R}^{N}} \rho_{n} v_{n} \varphi_{\delta} \leq \gamma\int_{\mathbb{R}^{N}} \varphi_{\delta} |v_{n}|^{q} + \int_{\mathbb{R}^{N}} \varphi_{\delta} |v_{n}|^{2^{*}}.
		\end{equation}
		By the fact that $(v_{n})$ is $(PS)_{c_{\gamma}}$, there is $w_{n} \in \partial I_{\gamma}(v_{n})$ and $\rho_{n} \in \partial \Psi(v_{n})$ such that
		$$||w_{n}||=\lambda_{I_{\gamma}}(v_{n})=o_{n}(1)\;\mbox{and}\; \left<w_{n}, \phi\right>= \left<Q'(v_{n}), \phi\right>- \left<\rho_{n}, \phi\right>,\;\forall\; \phi \in  H^{1}(\mathbb{R}^{N}).$$
		By (\ref{78}), (\ref{79}), (\ref{80}) and (\ref{81})
		\begin{eqnarray}
		o_{n}(1)=\left<w_{n}, \varphi_{\delta} v_{n} \right> &=& \int_{\mathbb{R}^{N}} \nabla v_{n} \nabla (\varphi_{\delta} v_{n}) + \int_{\mathbb{R}^{N}} V(x) |v_{n}|^{2} \varphi_{\delta} - \int_{\mathbb{R}^{N}} \rho_{n} v_{n} \varphi_{\delta} \nonumber \\
		&=& \int_{\mathbb{R}^{N}} |\nabla v_{n}|^{2} \varphi_{\delta} + \int_{\mathbb{R}^{N}} v_{n} \nabla \varphi_{\delta} \nabla v_{n} +\int_{\mathbb{R}^{N}} V(x) |v_{n}|^{2} \varphi_{\delta} - \int_{\mathbb{R}^{N}} \rho_{n} v_{n} \varphi_{\delta} \nonumber \\
		&\geq& \int_{\mathbb{R}^{N}} |\nabla v_{n}|^{2} \varphi_{\delta} + \int_{\mathbb{R}^{N}} v_{n} \nabla \varphi_{\delta} \nabla v_{n} +\int_{\mathbb{R}^{N}} V(x) |v_{n}|^{2} \varphi_{\delta}+\nonumber \\
		&-& \gamma\int_{\mathbb{R}^{N}} \varphi_{\delta} |v_{n}|^{q} - \int_{\mathbb{R}^{N}} \varphi_{\delta} |v_{n}|^{2^{*}}, \nonumber
		\end{eqnarray}
		that is,
		$$0 \geq \int_{\mathbb{R}^{N}} \varphi_{\delta} d\mu - \int_{\mathbb{R}^{N}} \varphi_{\delta} d\nu,\;\forall\;\delta>0.$$
		Crossing the limit when $\delta\rightarrow 0$, by the dominated convergence theorem of Lebesgue, we get the following relationship
		$$\mu(x_{j_{0}}) \leq \nu(x_{j_{0}}),$$
		this is,
		$$S\nu_{j_{0}}^{\frac{2}{2^{*}}}\leq \nu_{j_{0}}.$$
		By (\ref{99})
		\begin{equation}\label{83}
		\nu_{j_{0}} \geq S^{\frac{N}{2}}.
		\end{equation}
		Knowing $\rho_{n}(x) \in \partial_{t} F_{\gamma}(x,v_{n}(x))$
		\begin{eqnarray}
		o_{n}(1)+c_{\gamma}&=& I_{\gamma}(v_{n})- \frac{1}{2}\left<w_{n}, v_{n} \right> \nonumber\\
		&=& \frac{1}{2}\int_{\mathbb{R}^{N}} \rho_{n} v_{n} - \int_{\mathbb{R}^{N}} F_{\gamma}(x,v_{n}) \nonumber \\
		&=& \left(\frac{1}{2}-\frac{1}{2^{*}}\right) \int_{\mathbb{R}^{N}} \rho_{n} v_{n} \geq \frac{1}{N} \int_{\mathbb{R}^{N}} |v_{n}|^{2^{*}}, \nonumber
		\end{eqnarray}
		this is,
		\begin{equation*}
		\frac{1}{N} \liminf_{n}\left(\int_{\mathbb{R}^{N}} |v_{n}|^{2^{*}} \right)\leq c_{\gamma}.
		\end{equation*}
		Since $|v_{n}|^{2^{*}}\rightharpoonup \nu$ in $\mathcal{M}^{+}(\mathbb{R}^{N})$ and (\ref{83}), we get
		\begin{eqnarray}
		c_{\gamma} &\geq& \frac{1}{N} \liminf_{n} \left(\int_{\mathbb{R}^{N}} |v_{n}|^{2^{*}} \right)\nonumber  \\
		&\geq & \frac{1}{N}\nu(\mathbb{R}^{N}) \nonumber \\
		& \geq & \frac{1}{N} \nu(\left\{x_{j_{0}}\right\}) = \frac{1}{N} \nu_{j_{0}} \geq \frac{1}{N} S^{\frac{N}{2}}>c_{\gamma}, \nonumber 
		\end{eqnarray}
		what is absurd. 
		
		Therefore, $\nu=0$ implying in $|v_{n}|^{2^{*}}\rightharpoonup 0$ in $\mathcal{M}^{+}(\mathbb{R}^{N})$ and consequently $v_{n}\rightarrow 0$ in $L_{loc}^{2^{*}}(\mathbb{R}^{N})$ contradicting (\ref{84}).
	
Now, we are ready to show the estimate from above involving the number $c_{\gamma}>0$.
\begin{itemize}
	\item Case $N\geq 4$.
\end{itemize}
\begin{remark}\label{88}
We may assume without loss of generality $V(0)<0$. So, by continuity of $V: \mathbb{R}^{N}\rightarrow \mathbb{R}$, we can choose $r>0$ such that $V(x)\leq -\beta<0$ for $x \in B_{r}$ and some $\beta>0$.
\end{remark}

Consider the function
$$\varphi_{\varepsilon}(x):= \frac{c_{N}\psi(x)\varepsilon^{\frac{N-2}{2}}}{(\varepsilon^{2}+|x|^{2})^{\frac{N-2}{2}}}$$
where $c_{N}=(N(N-2))^{\frac{N-2}{4}}$, $\varepsilon>0$ and $ \psi \in C_{0}^{\infty}(\mathbb{R}^{N})$ is such that 
$$\psi(x)=1\;\;\mbox{for}\;\; |x|\leq \frac{r}{2}\;\;\mbox{and}\;\; \psi(x)=0\;\;\mbox{for}\;\; |x| \geq r.$$ 
We shall need the following asymptotic estimates as $\varepsilon\rightarrow 0^{+}$ (see \cite{MW}).
\begin{eqnarray}
&& ||\nabla\varphi_{\varepsilon} ||_{2}^{2}=S^{\frac{N}{2}}+O(\varepsilon^{n-2}),\;\; ||\nabla\varphi_{\varepsilon} ||_{1}=O(\varepsilon^{\frac{N-2}{2}}),\;\; ||\varphi_{\varepsilon}||_{2^{*}}^{2^{*}}=S^{\frac{N}{2}}+O(\varepsilon^{N})\nonumber \\
&& \label{91} \\
&& ||\varphi_{\varepsilon}||_{2^{*}-1}^{2^{*}-1}= O(\varepsilon^{\frac{N-2}{2}}),\;\; ||\varphi_{\varepsilon}||_{1}=O(\varepsilon^{\frac{N-2}{2}})\nonumber 
\end{eqnarray}
and
\begin{eqnarray} \label{94}
||\varphi_{\varepsilon}||_{2}^{2}=\left\{\begin{array}{c}
b \varepsilon^{2}|log(\varepsilon)|+O(\varepsilon^{2}),\;\;\mbox{if}\;\; N=4 \\
b \varepsilon^{2}+ O(\varepsilon^{N-2}),\;\; \mbox{if}\;\; N\geq 5.
\end{array}
\right.
\end{eqnarray} 
where $b>0$. 

\begin{proposition}\label{96}
	Suppose $N\geq 4$, there is $\varepsilon_{0}>0$ such that for all $\varepsilon \in (0, \varepsilon_{0})$ and for all  $O(\varepsilon^{N-2})$ 
	\begin{equation*}
	\frac{\displaystyle\int_{\mathbb{R}^{N}}(|\nabla\varphi_{\varepsilon}|^{2}+V(x)\varphi_{\varepsilon}^{2})}{|\varphi_{\varepsilon}|_{2^{*}}^{2}}+O(\varepsilon^{N-2})< S.
	\end{equation*}
	
\end{proposition}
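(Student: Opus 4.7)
The proposition is a standard Brezis--Nirenberg-type comparison estimate adapted to the family $\varphi_\varepsilon$. My plan is to expand numerator and denominator asymptotically and exhibit a strictly negative correction of order $\varepsilon^2$ (or $\varepsilon^2|\log\varepsilon|$ when $N=4$) produced by the potential $V$, which beats all error terms of order $\varepsilon^{N-2}$ for small $\varepsilon$.

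\emph{Step 1: Numerator.} Since $\psi$ (and hence $\varphi_\varepsilon$) is supported in $B_r$, Remark \ref{88} gives $V(x)\leq -\beta<0$ throughout the support. Hence
\begin{equation*}
\int_{\mathbb{R}^{N}}(|\nabla\varphi_{\varepsilon}|^{2}+V(x)\varphi_{\varepsilon}^{2})\;\leq\;\|\nabla\varphi_\varepsilon\|_2^2\;-\;\beta\,\|\varphi_\varepsilon\|_2^2.
\end{equation*}
By the first estimate in (\ref{91}), $\|\nabla\varphi_\varepsilon\|_2^2=S^{N/2}+O(\varepsilon^{N-2})$, so the numerator is bounded above by $S^{N/2}-\beta\|\varphi_\varepsilon\|_2^2+O(\varepsilon^{N-2})$.

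\emph{Step 2: Denominator.} From $\|\varphi_\varepsilon\|_{2^*}^{2^*}=S^{N/2}+O(\varepsilon^{N})$ and the binomial (Taylor) expansion of $s\mapsto s^{2/2^*}$ near $s=S^{N/2}$, one obtains
\begin{equation*}
\|\varphi_\varepsilon\|_{2^*}^{2}=S^{(N-2)/2}+O(\varepsilon^{N}).
\end{equation*}

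\emph{Step 3: Take the quotient.} Dividing and using $(1+O(\varepsilon^N))^{-1}=1+O(\varepsilon^N)$ we get
\begin{equation*}
\frac{\int(|\nabla\varphi_\varepsilon|^2+V\varphi_\varepsilon^2)}{\|\varphi_\varepsilon\|_{2^*}^{2}}\;\leq\; S\;-\;\frac{\beta\,\|\varphi_\varepsilon\|_2^2}{S^{(N-2)/2}}\;+\;O(\varepsilon^{N-2}).
\end{equation*}
Adding the extra $O(\varepsilon^{N-2})$ from the statement, it suffices to show that $\beta\|\varphi_\varepsilon\|_2^2/S^{(N-2)/2}$ strictly dominates $O(\varepsilon^{N-2})$ for $\varepsilon$ small.

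\emph{Step 4: Close the estimate.} By (\ref{94}):
\begin{itemize}
\item If $N\geq 5$: $\|\varphi_\varepsilon\|_2^2=b\varepsilon^2+O(\varepsilon^{N-2})$, so the negative correction is of order $\varepsilon^2$; since $N-2\geq 3$, $\varepsilon^2$ dominates $O(\varepsilon^{N-2})$ as $\varepsilon\to 0^+$.
\item If $N=4$: $\|\varphi_\varepsilon\|_2^2=b\varepsilon^2|\log\varepsilon|+O(\varepsilon^2)$, and here $N-2=2$, so the leading negative correction $\sim\varepsilon^2|\log\varepsilon|$ still beats the error $O(\varepsilon^2)$ as $\varepsilon\to 0^+$ thanks to the logarithmic blow-up.
\end{itemize}
In either case there exists $\varepsilon_0>0$ such that the displayed inequality is strict for all $\varepsilon\in(0,\varepsilon_0)$, which concludes the proof.

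The genuine obstacle is the borderline dimension $N=4$: the gradient error and the potential gain are both of size $\varepsilon^2$, and only the extra factor $|\log\varepsilon|$ from (\ref{94}) makes the potential term dominant. This is exactly the classical reason why the Brezis--Nirenberg argument requires the dimension restriction, and it is handled by a careful asymptotic comparison of the two $\varepsilon^2$-terms; the remaining dimensions $N\geq 5$ follow at once by plain powers of $\varepsilon$.
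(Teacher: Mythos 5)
Your proof is correct and follows essentially the same route as the paper: bound $\int V\varphi_\varepsilon^2$ by $-\beta\|\varphi_\varepsilon\|_2^2$ using the sign of $V$ on the support of $\psi$, expand numerator and denominator via the asymptotics (\ref{91})--(\ref{94}), and observe that the resulting negative term of order $\varepsilon^2$ (or $\varepsilon^2|\log\varepsilon|$ for $N=4$) dominates every $O(\varepsilon^{N-2})$ error. The only difference is presentational: the paper carries out the $N=4$ case in full (with a mean value theorem expansion of $[1+O(\varepsilon^4)]^{-1/2}$) and declares $N\geq 5$ analogous, whereas you treat both dimension ranges uniformly and more compactly.
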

\begin{proof}
	
	In fact, given $O(\varepsilon^{N-2})$  
	\begin{eqnarray}
	\frac{\displaystyle\int_{\mathbb{R}^{N}}(|\nabla\varphi_{\varepsilon}|^{2}+V(x)\varphi_{\varepsilon}^{2})}{|\varphi_{\varepsilon}|_{2^{*}}^{2}}+O(\varepsilon^{N-2})&=& \frac{S^{\frac{N}{2}}+O(\varepsilon^{N-2})+\displaystyle\int_{\mathbb{R}^{N}} V(x)\varphi_{\varepsilon}^{2}}{[S^{\frac{N}{2}}+O(\varepsilon^{N})]^{\frac{N-2}{N}}}+O(\varepsilon^{N-2}) \nonumber \\
	&=& S\left[\frac{1+\frac{O(\varepsilon^{N-2})}{S^{\frac{N}{2}}}+\frac{1}{S^{\frac{N}{2}}}\displaystyle\int_{\mathbb{R}^{N}} V(x)\varphi_{\varepsilon}^{2}}{[1+\frac{O(\varepsilon^{N})}{S^{\frac{N}{2}}}]^{\frac{N-2}{N}}}\right]+O(\varepsilon^{N-2}) \nonumber \\
	&=&S\left[\frac{1+O(\varepsilon^{N-2})+\frac{1}{S^{\frac{N}{2}}}\displaystyle\int_{\mathbb{R}^{N}} V(x)\varphi_{\varepsilon}^{2}}{[1+O(\varepsilon^{N})]^{\frac{N-2}{N}}}\right]+O(\varepsilon^{N-2}). \nonumber
	\end{eqnarray}
	On the other hand, by (\ref{94}) and remark \ref{88} 
	\begin{eqnarray} \label{95}
	\int_{\mathbb{R}^{N}} V(x) \varphi_{\varepsilon}^{2} \leq -\beta ||\varphi_{\varepsilon}||_{2}^{2}=\left\{\begin{array}{c}
	-\beta b \varepsilon^{2}|log(\varepsilon)|+O(\varepsilon^{2})\;\;\mbox{if}\;\; N=4 \\
	- \beta b \varepsilon^{2}+ O(\varepsilon^{N-2})\;\; \mbox{if}\;\; N\geq 5.
	\end{array}
	\right.
	\end{eqnarray} 
	If $N=4$, by (\ref{95}), we obtain
	\begin{eqnarray}
	\frac{\displaystyle\int_{\mathbb{R}^{N}}(|\nabla\varphi_{\varepsilon}|^{2}+V(x)\varphi_{\varepsilon}^{2})}{|\varphi_{\varepsilon}|_{2^{*}}^{2}}+O(\varepsilon^{2}) &=& S\left[\frac{1+O(\varepsilon^{2})+\frac{1}{S^{2}}\displaystyle\int_{\mathbb{R}^{N}} V(x)\varphi_{\varepsilon}^{2}}{[1+O(\varepsilon^{4})]^{\frac{1}{2}}}\right]+O(\varepsilon^{2}) \nonumber \\
	&\leq & S\left[\frac{1+O(\varepsilon^{2})-\frac{1}{S^{2}}\beta b \varepsilon^{2}|log(\varepsilon)|+O(\varepsilon^{2})}{[1+O(\varepsilon^{4})]^{\frac{1}{2}}}\right]+O(\varepsilon^{2}) \nonumber \\
	&=& S\left(\frac{1}{[1+O(\varepsilon^{4})]^{\frac{1}{2}}}\right)+S\left(\frac{O(\varepsilon^{2})}{[1+O(\varepsilon^{4})]^{\frac{1}{2}}}\right) +\nonumber \\
	&-&S\left(\frac{\beta b \varepsilon^{2}}{S^{2}}\frac{|log(\varepsilon)|}{[1+O(\varepsilon^{4})]^{\frac{1}{2}}}\right) +O(\varepsilon^{2}). \nonumber
	\end{eqnarray}
	Note that
	\begin{itemize}
		\item [(1)]
		$\dfrac{1}{[1+O(\varepsilon^{4})]^{\frac{1}{2}}}\rightarrow 1$ as $\varepsilon \rightarrow 0^{+}$;
		\item [(2)] $\dfrac{O(\varepsilon^{2})}{[1+O(\varepsilon^{4})]^{\frac{1}{2}}}=O(\varepsilon^{2})$. 
	\end{itemize}
		Just see that
		\begin{equation*}
		\frac{\frac{O(\varepsilon^{2})}{[1+O(\varepsilon^{4})]^{\frac{1}{2}}}}{\varepsilon^{2}}= \frac{O(\varepsilon^{2})}{\varepsilon^{2}} \cdot \frac{1}{[1+O(\varepsilon^{4})]^{\frac{1}{2}}}
		\end{equation*}
		is  bounded for $\varepsilon \approx 0^{+}$.

	Follow from $(1)$,  $\varepsilon \approx 0^{+}$, that
	\begin{equation*}
	\dfrac{1}{[1+O(\varepsilon^{4})]^{\frac{1}{2}}} \geq \frac{1}{2}\Leftrightarrow \dfrac{-\beta d \varepsilon^{2}}{[1+O(\varepsilon^{4})]^{\frac{1}{2}}} \leq \frac{-\beta d \varepsilon^{2}}{2},
	\end{equation*}
	and with that
	\begin{eqnarray}
	\frac{\displaystyle\int_{\mathbb{R}^{N}}(|\nabla\varphi_{\varepsilon}|^{2}+V(x)\varphi_{\varepsilon}^{2})}{|\varphi_{\varepsilon}|_{2^{*}}^{2}}+O(\varepsilon^{2})&=& S\left(\frac{1}{[1+O(\varepsilon^{4})]^{\frac{1}{2}}}\right)+S\left(\frac{O(\varepsilon^{2})}{[1+O(\varepsilon^{4})]^{\frac{1}{2}}}\right)+\nonumber \\
	&-&S\left(\frac{\beta b \varepsilon^{2}}{S^{2}}\frac{|log(\varepsilon)|}{[1+O(\varepsilon^{4})]^{\frac{1}{2}}}\right)+O(\varepsilon^{2}) \nonumber \\
	&\leq & S\left(\frac{1}{[1+O(\varepsilon^{4})]^{\frac{1}{2}}}\right)+O(\varepsilon^{2})-S\left(\frac{\beta d \varepsilon^{2}}{2 S^{2}} |log(\varepsilon)|\right)+O(\varepsilon^{2}) \nonumber \\
	&= & S\left(\frac{1}{[1+O(\varepsilon^{4})]^{\frac{1}{2}}}\right)-S\left(\frac{\beta d \varepsilon^{2}}{2 S^{2}} |log(\varepsilon)|\right)+O(\varepsilon^{2}) \nonumber
	\end{eqnarray}
	Consider the application
	\begin{eqnarray}
	g: & [0, O(\varepsilon^{4})]& \longrightarrow \mathbb{R} \nonumber \\
	&t&\longmapsto g(t)= \frac{1}{[1+t]^{\frac{1}{2}}}. \nonumber
	\end{eqnarray}
	By the mean value theorem there is $\theta  \in (0, O(\varepsilon^{4}))$ such that
	\begin{eqnarray}
	\frac{1}{[1+O(\varepsilon^{4})]^{\frac{1}{2}}}- 1= -\frac{1}{2} [1+\theta]^{-\frac{3}{2}} O(\varepsilon^{4}), \nonumber
	\end{eqnarray}
	that is,
	\begin{equation*}
	\frac{1}{[1+O(\varepsilon^{4})]^{\frac{1}{2}}}= 1- \frac{1}{2} [1+\theta]^{-\frac{3}{2}} O(\varepsilon^{4})=1- O(\varepsilon^{4}).
	\end{equation*}
	Therefore,
	\begin{eqnarray}
	\frac{\displaystyle\int_{\mathbb{R}^{N}}(|\nabla\varphi_{\varepsilon}|^{2}+V(x)\varphi_{\varepsilon}^{2})}{|\varphi_{\varepsilon}|_{2^{*}}^{2}} +O(\varepsilon^{2})
	&\leq & S\left(\frac{1}{[1+O(\varepsilon^{4})]^{\frac{1}{2}}}\right)-S\left(\frac{\beta d \varepsilon^{2}}{2 S^{2}} |log(\varepsilon)|\right)+O(\varepsilon^{2}) \nonumber \\
	&=& S-O(\varepsilon^{4})-\frac{\beta d \varepsilon^{2}}{2 S} |log(\varepsilon)|+O(\varepsilon^{2}).\nonumber 
	\end{eqnarray}
	\begin{claim}
	There is $\varepsilon_{0}>0$ such that
	$$O(\varepsilon^{4})+O(\varepsilon^{2})-\dfrac{\beta d \varepsilon^{2}}{2 S^{2}} |log(\varepsilon)|<0,\;\forall\;\varepsilon \in (0, \varepsilon_{0}).$$ 
	\end{claim}
	Follows from the fact that
	\begin{eqnarray}
	O(\varepsilon^{4})+O(\varepsilon^{2})-\dfrac{\beta d \varepsilon^{2}}{2 S^{2}} |log(\varepsilon)|&=& \varepsilon^{2} \left[\varepsilon^{2}\frac{O(\varepsilon^{4})}{\varepsilon^{4}}+\frac{O(\varepsilon^{2})}{\varepsilon^{2}}-\dfrac{\beta d}{2 S^{2}} |log(\varepsilon)| \right] \nonumber
	\end{eqnarray}
	with the fact that:
	\begin{equation*}
	\varepsilon^{2}\dfrac{O(\varepsilon^{4})}{\varepsilon^{4}}+\dfrac{O(\varepsilon^{2})}{\varepsilon^{2}} \;\; \mbox{is bounded for}\;\; \varepsilon\approx 0^{+}\;\;\mbox{and}\;\;\lim_{\varepsilon \rightarrow 0^{+}} -\dfrac{\beta d}{2 S^{2}} |log(\varepsilon)|= -\infty.
	\end{equation*}
	Therefore, there is $\varepsilon_{0}>0$ such that
	$$\frac{\displaystyle\int_{\mathbb{R}^{N}}(|\nabla\varphi_{\varepsilon}|^{2}+V(x)\varphi_{\varepsilon}^{2})}{|\varphi_{\varepsilon}|_{2^{*}}^{2}} +O(\varepsilon^{2})<S, \;\forall\; \varepsilon \in (0, \varepsilon_{0})\;\;\mbox{and for all}\;\; O(\varepsilon^{2}).$$
	The case $N \geq 5$ is analogous.
\end{proof}

\begin{remark}\label{92}
	
	\begin{itemize}
		
		\item [(1)] Since 
		$$I_{\gamma}u)= \frac{1}{2} \int_{\mathbb{R}^{N}} (|\nabla u|^{2}+ V(x) u^{2})- \int_{\mathbb{R}^{N}} F_{\gamma}(x,u) \leq \frac{1}{2} \int_{\mathbb{R}^{N}} (|\nabla u|^{2}+ V(x) u^{2})- \frac{1}{2^{*}} \int_{\mathbb{R}^{N}} |u|^{2^{*}},$$
		defining
		$$J(u)=\frac{1}{2} \int_{\mathbb{R}^{N}} (|\nabla u|^{2}+ V(x) u^{2})- \frac{1}{2^{*}}\int_{\mathbb{R}^{N}} |u|^{2^{*}},$$ 
		we get $I_{\gamma}u) \leq J(u)$ for all $u \in H^{1}(\mathbb{R}^{N})$.
	\end{itemize}
\end{remark}
\begin{proposition}\label{97}
	For $u \in H^{1}(\mathbb{R}^{N})$, we have:
	\begin{itemize}
		\item [(i)] If $\displaystyle\int_{\mathbb{R}^{N}}[|\nabla u|^{2}+V(x)u^{2}] > 0$, then
		$$\max_{t\geq 0}J(tu)= \frac{1}{N} \left(\frac{\displaystyle\int_{\mathbb{R}^{N}}[|\nabla u|^{2}+V(x)u^{2}]}{||u||_{2^{*}}^{2}}\right)^{\frac{N}{2}}.$$
		\item [(ii)] If $\displaystyle\int_{\mathbb{R}^{N}}[|\nabla u|^{2}+V(x)u^{2}] \leq 0$, then 
		$$\max_{t\geq 0}J(tu)=0.$$
	\end{itemize}
\end{proposition}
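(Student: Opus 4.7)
My plan is to reduce both items to a one-variable calculus problem. I would set $A := \int_{\mathbb{R}^N}(|\nabla u|^2 + V(x) u^2)$ and $B := \int_{\mathbb{R}^N}|u|^{2^*}$, so that
\[
J(tu) = \frac{t^2}{2}\,A - \frac{t^{2^*}}{2^*}\,B
\]
is a smooth real-valued function of $t \geq 0$ with $J(0) = 0$. Everything then becomes a question about this scalar function.

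For item (ii), the hypothesis gives $A \leq 0$, and $B \geq 0$ holds for every $u \in H^1(\mathbb{R}^N)$. Both terms of $J(tu)$ are therefore nonpositive for $t \geq 0$, so $J(tu) \leq 0 = J(0)$ and the supremum over $t \geq 0$ equals $0$, attained at $t = 0$.

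For item (i), the hypothesis $A > 0$ forces $u \neq 0$, hence $B > 0$. Differentiating I would obtain $\tfrac{d}{dt}J(tu) = t\bigl(A - t^{2^*-2}B\bigr)$, so the unique critical point in $(0,\infty)$ is
\[
t_\ast = \left(\frac{A}{B}\right)^{\frac{1}{2^*-2}} = \left(\frac{A}{B}\right)^{\frac{N-2}{4}},
\]
using $2^*-2 = 4/(N-2)$. Because $J(tu) \to -\infty$ as $t \to +\infty$ while $J(0)=0$, the point $t_\ast$ is automatically a global maximum on $[0,\infty)$. Substituting back, employing $2^*\cdot(N-2)/4 = N/2$, the identity $\tfrac{1}{2} - \tfrac{1}{2^*} = \tfrac{1}{N}$, and $\|u\|_{2^*}^N = B^{(N-2)/2}$, one obtains
\[
J(t_\ast u) = \left(\frac{1}{2} - \frac{1}{2^*}\right) \frac{A^{N/2}}{B^{(N-2)/2}} = \frac{1}{N}\left(\frac{A}{\|u\|_{2^*}^2}\right)^{N/2},
\]
which is the desired expression.

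I do not anticipate any serious obstacle: the statement is a routine single-variable optimization, and its role in the paper is structural rather than technical. Its intended use is to be paired with Proposition \ref{96} and Remark \ref{92} to produce an upper bound of the form $\sup_{t \geq 0} I_\gamma(t\varphi_\varepsilon) < S^{N/2}/N$ for small $\varepsilon$, which is precisely the threshold required in Claim \ref{103} to guarantee that the weak limit of the $(PS)_{c_\gamma}$ sequence is nontrivial.
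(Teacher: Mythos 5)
Your proof is correct and follows essentially the same route as the paper: both reduce to the one-variable function $h(t)=J(tu)=\frac{t^{2}}{2}A-\frac{t^{2^{*}}}{2^{*}}B$, identify the same maximizer $t_{0}=(A/B)^{1/(2^{*}-2)}$ in case (i), and observe in case (ii) that $h\leq 0=h(0)$. Your version is in fact slightly more explicit than the paper's (you compute the derivative and justify $B>0$, whereas the paper simply states $t_{0}$), but there is no substantive difference.
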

\begin{proof}
	
	Given $u \in H^{1}(\mathbb{R}^{N})$, set the function  
	\begin{eqnarray}
	h: &[0, +\infty)&\longrightarrow \mathbb{R} \nonumber \\
	&t& \longmapsto h(t)=J(tu) \nonumber.
	\end{eqnarray}
	
	\textbf{Case (i)}.
	
	By the fact
	$$h(t)\rightarrow -\infty\;\; \mbox{as}\;\; t\rightarrow +\infty\;\; \mbox{and} \;\;h(t)>0 \;\; \mbox{for}\;\; t\approx 0,$$ 
	there is $t_{0} \in (0, +\infty)$ such that $h(t_{0})=\displaystyle\max_{t\geq 0} h(t)$. More precisely 
	$$t_{0}= \left(\frac{\displaystyle\int_{\mathbb{R}^{N}}[|\nabla u|^{2}+V(x)u^{2}]}{||u||_{2^{*}}^{2^{*}}}\right)^{\frac{1}{2^{*}-2}}.$$
	Therefore,
	$$h(t_{0})=\frac{1}{N} \left(\frac{\displaystyle\int_{\mathbb{R}^{N}}[|\nabla u|^{2}+V(x)u^{2}]}{||u||_{2^{*}}^{2}}\right)^{\frac{N}{2}}.$$
	
	\textbf{Case (ii)} 
	
	Just notice that $h(t) \leq 0$ for all $t\geq 0$ and $h(0)=0$.
\end{proof}
Before continuing we will make some considerations. First we wil need the following proposition, whose proof is in \cite[Proposition 2.2]{Chabrowski}.

\begin{proposition}\label{87}
	Suppose $V \in L^{\infty}(\mathbb{R}^{N})$ and $(V_{1})-(V_{2})$, there is $c_{0}>0$ such that
	$$||u^{-}||_{W^{1, \infty}(\mathbb{R}^{N})} \leq c_{0} ||u^{-}||_{2},\;\forall\; u^{-} \in E^{-}.$$
\end{proposition}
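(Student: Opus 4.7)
The strategy is to exploit the fact that, on the negative spectral subspace, the operator $A:=-\Delta+V$ is bounded on $L^{2}$, and then bootstrap elliptic regularity to get arbitrarily high $H^{k}$ bounds before invoking Sobolev embedding.

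First, since $V\in L^{\infty}(\mathbb{R}^{N})$ the operator $-\Delta+V$ is self-adjoint on $L^{2}(\mathbb{R}^{N})$ with form domain $H^{1}(\mathbb{R}^{N})$, and its spectrum lies in $[-\|V\|_{\infty},\infty)$. Combined with $(V_{1})$, which asserts $0\notin\sigma(-\Delta+V)$, the negative part of the spectrum is closed, bounded, and bounded away from $0$; hence it is contained in some compact interval $[-M,-\delta]\subset(-\infty,0)$. By the spectral theorem, on the negative spectral subspace $E^{-}$ the restriction of $A=-\Delta+V$ is a bounded self-adjoint operator on $L^{2}$, and one has
$$\|Au^{-}\|_{2}\leq M\,\|u^{-}\|_{2},\qquad\forall\,u^{-}\in E^{-}.$$

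Next, I write $-\Delta u^{-}=Au^{-}-Vu^{-}$, which gives $\|\Delta u^{-}\|_{2}\leq(M+\|V\|_{\infty})\|u^{-}\|_{2}$. Standard elliptic regularity on $\mathbb{R}^{N}$ (or the Fourier characterization of $H^{2}$) yields $u^{-}\in H^{2}(\mathbb{R}^{N})$ with $\|u^{-}\|_{H^{2}}\leq C_{1}\|u^{-}\|_{2}$. Because $A$ commutes with the spectral projection $P^{-}$, the subspace $E^{-}$ is $A$-invariant, so $Au^{-}\in E^{-}$ and the previous inequality applies again to $Au^{-}$. Expanding $A^{k}u^{-}$ and using induction on $k$, together with the elliptic estimate at each step, I obtain
$$\|u^{-}\|_{H^{2k}(\mathbb{R}^{N})}\leq C_{k}\,\|u^{-}\|_{2},\qquad\forall\,k\in\mathbb{N},$$
where $C_{k}$ depends only on $k$, $M$, and $\|V\|_{\infty}$.

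Finally, choose $k$ so that $2k>\frac{N}{2}+1$. The Sobolev embedding $H^{2k}(\mathbb{R}^{N})\hookrightarrow W^{1,\infty}(\mathbb{R}^{N})$ then yields
$$\|u^{-}\|_{W^{1,\infty}(\mathbb{R}^{N})}\leq C\,\|u^{-}\|_{H^{2k}}\leq c_{0}\,\|u^{-}\|_{2},$$
with $c_{0}=C\cdot C_{k}$, proving the claim.

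The main obstacle is the inductive step: one must carefully organize the expansion of $A^{k}u^{-}=(-\Delta+V)^{k}u^{-}$ so that the leading term $(-\Delta)^{k}u^{-}$ can be isolated, the lower-order terms (involving derivatives of $V$ or mixed products) can be controlled by the inductive hypothesis, and the resulting elliptic estimates give constants depending only on $\|V\|_{\infty}$ and $M$, but not on $u^{-}$. Once the induction is set up, the final Sobolev embedding is immediate.
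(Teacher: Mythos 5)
The paper does not prove this proposition itself; it simply quotes it from Chabrowski and Szulkin \cite[Proposition 2.2]{Chabrowski}. Your opening step is correct and is the natural starting point: the negative spectrum of $A=-\Delta+V$ lies in a compact interval $[-M,-\delta]\subset(-\infty,0)$, the subspace $E^{-}$ is $A$-invariant, $\|Au^{-}\|_{2}\leq M\|u^{-}\|_{2}$, and writing $-\Delta u^{-}=Au^{-}-Vu^{-}$ gives $\|u^{-}\|_{H^{2}}\leq C\|u^{-}\|_{2}$. The genuine gap is the induction to $H^{2k}$ for $k\geq 2$. To bound $\|u^{-}\|_{H^{4}}$ you must control $\|\Delta^{2}u^{-}\|_{2}$, and $\Delta^{2}u^{-}=\Delta(Vu^{-})-\Delta(Au^{-})$; the second term is harmless because $Au^{-}\in E^{-}$, but $\Delta(Vu^{-})=V\Delta u^{-}+2\nabla V\cdot\nabla u^{-}+u^{-}\Delta V$ requires two derivatives of $V$, which is only assumed bounded and continuous (periodic). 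Equivalently, $-\Delta$ and multiplication by $V$ do not commute, and the commutator $[\Delta,V]$ is a first-order operator whose coefficients $\nabla V$ and $\Delta V$ need not exist; so the ``lower-order terms involving derivatives of $V$'' that you propose to absorb in the inductive step are not even defined. The ladder therefore stops at $H^{2}$, and since $H^{2}(\mathbb{R}^{N})\not\hookrightarrow W^{1,\infty}(\mathbb{R}^{N})$ for any $N\geq 3$, the conclusion does not follow.

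The standard repair keeps your first step but climbs in integrability rather than in the order of derivatives, so that $V$ is only ever multiplied, never differentiated. From $u^{-}\in H^{2}$ with $\|u^{-}\|_{H^{2}}\leq C\|u^{-}\|_{2}$ one gets $u^{-}\in L^{p_{1}}$ for some $p_{1}>2$ by Sobolev embedding; since $Au^{-}\in E^{-}$, the right-hand side $g=Au^{-}-Vu^{-}$ of $-\Delta u^{-}=g$ then satisfies $\|g\|_{p_{1}}\leq C\|u^{-}\|_{2}$, and the Calder\'on--Zygmund estimate gives $u^{-}\in W^{2,p_{1}}(\mathbb{R}^{N})$ with the corresponding bound. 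Iterating finitely many times yields $u^{-}\in W^{2,p}(\mathbb{R}^{N})$ for some $p>N$, which embeds into $W^{1,\infty}(\mathbb{R}^{N})$, giving the stated estimate. (An alternative route, closer in spirit to \cite{Chabrowski}, uses the ultracontractivity of the semigroup $e^{-t(-\Delta+V)}$ for bounded $V$ together with the boundedness of $e^{tA}$ on $E^{-}$ coming from the compactly supported spectral measure.)
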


By the  convexity of the application $t\mapsto |t|^{2^{*}}$, H\"older inequality and Proposition \ref{87}, there is $c_{1}>0$ such that

\begin{eqnarray}
||u||^{2^{*}}_{2^{*}} &\geq& ||s\varphi_{\varepsilon}||_{2^{*}}^{2^{*}}+ 2^{*} \int_{\mathbb{R}^{N}} (s\varphi_{\varepsilon})^{2^{*}-1} u^{-} \nonumber \\
&\geq & ||s\varphi_{\varepsilon}||_{2^{*}}^{2^{*}}- c_{1}||\varphi_{\varepsilon}||^{2^{*}-1}_{2^{*}-1} ||u^{-}||_{2}\label{89}
\end{eqnarray}
and
\begin{equation}\label{90}
\int_{\mathbb{R}^{N}} (\nabla \varphi_{\varepsilon} \nabla u^{-} + V(x) \varphi_{\varepsilon} u^{-}) \leq O(\varepsilon^{\frac{N-2}{2}})||u^{-}||_{2}.
\end{equation}

\begin{proposition} \label{98}
	There is $\varepsilon_{0}>0$ such that
	$$\sup_{u \in Z_{\varepsilon}, ||u||_{2^{*}}=1} \int_{\mathbb{R}^{N}} (|\nabla u|^{2}+V(x) u^{2})<S, \;\forall\; \varepsilon \in (0, \varepsilon_{0}),$$
	where $Z_{\varepsilon}= E^{-} \oplus \mathbb{R} \varphi_{\varepsilon} \equiv E^{-} \oplus \mathbb{R} \varphi_{\varepsilon}^{+}$.
\end{proposition}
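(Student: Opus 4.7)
The plan is to recast the claim as an upper bound for the functional
\[
J(v) = \frac{1}{2}\int_{\mathbb{R}^N}(|\nabla v|^2+V(x)v^2) - \frac{1}{2^*}\|v\|_{2^*}^{2^*}
\]
from Remark~\ref{92} restricted to $Z_\varepsilon$, and then invoke Proposition~\ref{96} to close it. Given $u\in Z_\varepsilon$ with $\|u\|_{2^*}=1$, set $\mathcal{Q}(u):=\int_{\mathbb{R}^N}(|\nabla u|^2+V(x)u^2)$. If $\mathcal{Q}(u)\leq 0$ the claim is trivial, while if $\mathcal{Q}(u)>0$ Proposition~\ref{97}(i) gives $\max_{t\geq 0}J(tu)=\frac{1}{N}\mathcal{Q}(u)^{N/2}$. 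Since $\sup_{v\in Z_\varepsilon}J(v)=\sup_{\|u\|_{2^*}=1,\, u\in Z_\varepsilon}\max_{t\geq 0}J(tu)$, it suffices to prove
\[
\sup_{v\in Z_\varepsilon} J(v) < \frac{S^{N/2}}{N}
\]
for $\varepsilon$ sufficiently small.

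\textbf{Upper bound on $J(v)$.} Decompose $v=u^-+s\varphi_\varepsilon$ with $u^-\in E^-$ and $s\in\mathbb{R}$; since $J(-v)=J(v)$ we may assume $s\geq 0$. Expanding
\[
\mathcal{Q}(v)=s^2\mathcal{Q}(\varphi_\varepsilon)+2s\int_{\mathbb{R}^N}(\nabla\varphi_\varepsilon\nabla u^-+V\varphi_\varepsilon u^-)+\mathcal{Q}(u^-),
\]
using (\ref{Functional}) together with Proposition~\ref{87} to obtain $\mathcal{Q}(u^-)\leq -c\|u^-\|_2^2$, applying (\ref{90}) to the cross term, and invoking (\ref{89}) to lower-bound $\|v\|_{2^*}^{2^*}$, one arrives at
\[
J(v)\leq \frac{s^2}{2}\mathcal{Q}(\varphi_\varepsilon) - \frac{s^{2^*}}{2^*}\|\varphi_\varepsilon\|_{2^*}^{2^*} + A(s)\,O(\varepsilon^{(N-2)/2})\,\|u^-\|_2 - \frac{c}{2}\|u^-\|_2^2,
\]
where $A(s)$ is a polynomial in $s$. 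Completing the square in $\|u^-\|_2$ absorbs the last two terms into $\frac{A(s)^2}{2c}\,O(\varepsilon^{N-2})$. Maximising the remaining function of $s$ via Proposition~\ref{97} (applied to $\varphi_\varepsilon$) — whose maximiser $s_0$ is uniformly bounded in $\varepsilon$ since $\mathcal{Q}(\varphi_\varepsilon)/\|\varphi_\varepsilon\|_{2^*}^{2^*}$ is, by (\ref{91}) — yields
\[
\sup_{v\in Z_\varepsilon}J(v)\leq \frac{1}{N}\left(\frac{\mathcal{Q}(\varphi_\varepsilon)}{\|\varphi_\varepsilon\|_{2^*}^2}\right)^{N/2}+O(\varepsilon^{N-2}).
\]

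\textbf{Conclusion and main obstacle.} Because $x\mapsto x^{N/2}$ is locally Lipschitz near $S$, the inequality of Proposition~\ref{96} propagates to its $N/2$-power: choosing an $O(\varepsilon^{N-2})$ term large enough to dominate the additive error above, Proposition~\ref{96} guarantees that the last displayed right-hand side is strictly below $S^{N/2}/N$ for all $\varepsilon\in(0,\varepsilon_0)$ with $\varepsilon_0$ small. Combined with the reduction in the first paragraph, this proves the statement. The main difficulty I expect is the uniform absorption of the $u^-$-dependent terms over the infinite-dimensional subspace $E^-$; this rests on both the strict negativity $\mathcal{Q}(u^-)\leq -c\|u^-\|_2^2$ (needed for the square completion) and on the sharp decay rates $O(\varepsilon^{(N-2)/2})$ appearing in (\ref{89})--(\ref{90}). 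A secondary point is ensuring that $s_0$ is $\varepsilon$-uniformly bounded, so that $A(s_0)$ contributes only an $O(\varepsilon^{N-2})$ perturbation.
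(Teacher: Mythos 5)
Your overall plan --- proving $\sup_{Z_\varepsilon} J < S^{N/2}/N$ first and then recovering the bound on the quadratic form via Proposition~\ref{97} --- inverts the paper's order (the paper bounds $\int_{\mathbb{R}^N}(|\nabla u|^2+V(x)u^2)$ directly on the $L^{2^*}$-unit sphere of $Z_\varepsilon$ using (\ref{89}), (\ref{90}) and Young's inequality, and only afterwards deduces $\sup_{Z_\varepsilon}J<S^{N/2}/N$), but it runs on the same estimates, and the reduction in your first paragraph is logically valid. The genuine gap is in the step ``completing the square in $\|u^-\|_2$ absorbs the last two terms into $\frac{A(s)^2}{2c}O(\varepsilon^{N-2})$'' followed by ``maximising the remaining function of $s$.'' Your $A(s)$ necessarily contains a term of order $s^{2^*-1}$, coming from $2^*\int(s\varphi_\varepsilon)^{2^*-1}u^-$ in (\ref{89}), so $A(s)^2$ grows like $s^{2(2^*-1)}$, and $2(2^*-1)>2^*$ for every $N$. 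Hence, for each fixed $\varepsilon>0$, the function
\[
s\longmapsto \frac{s^2}{2}\mathcal{Q}(\varphi_\varepsilon)-\frac{s^{2^*}}{2^*}\|\varphi_\varepsilon\|_{2^*}^{2^*}+\frac{A(s)^2}{2c}\,O(\varepsilon^{N-2})
\]
tends to $+\infty$ as $s\to\infty$; its supremum over $s\geq 0$ is infinite, and the claimed bound $\sup_{v\in Z_\varepsilon}J(v)\leq \frac{1}{N}\bigl(\mathcal{Q}(\varphi_\varepsilon)/\|\varphi_\varepsilon\|_{2^*}^2\bigr)^{N/2}+O(\varepsilon^{N-2})$ does not follow from the displayed steps. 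Your remark that ``$s_0$ is uniformly bounded'' controls only the maximiser of the unperturbed function, not where the perturbed expression becomes large.

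The missing ingredient is an a priori confinement of $s$ (and of $\|u^-\|$) \emph{before} any absorption is attempted. It is available: discard all $v$ with $\mathcal{Q}(v)\leq 0$ (they contribute nothing to either supremum); for the remaining $v$ the inequality $\|u^-\|^2\leq s^2\mathcal{Q}(\varphi_\varepsilon)+2s\,O(\varepsilon^{(N-2)/2})\|u^-\|_2$ forces $\|u^-\|\leq Cs$, and then the normalisation $\|v\|_{2^*}=1$ (or homogeneity, in your formulation) pins $s$ into a compact interval, bounded away from $0$ and $\infty$ uniformly for small $\varepsilon$ because $\|\varphi_\varepsilon\|_{2^*}^{2^*}\to S^{N/2}>0$ by (\ref{91}). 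With that in hand $A(s)$ is uniformly bounded, your square-completion produces a genuine $O(\varepsilon^{N-2})$ error, and the conclusion via Proposition~\ref{96} and monotonicity of $x\mapsto x^{N/2}$ closes correctly. This confinement is exactly what the paper's formulation is designed to exploit: working on $\|u\|_{2^*}=1$ from the outset, the Young-inequality step $O(\varepsilon^{(N-2)/2})\|u^-\|_2\leq \frac{1}{2}O(\varepsilon^{N-2})+\frac{\tilde c}{2}\|u^-\|_2^2$ involves only $\|u^-\|_2$ and no uncontrolled power of $s$. A minor point: the inequality $\mathcal{Q}(u^-)\leq -c\|u^-\|_2^2$ follows from the equivalent norm in (\ref{Functional}) together with the embedding $H^1(\mathbb{R}^N)\hookrightarrow L^2(\mathbb{R}^N)$, not from Proposition~\ref{87}, which is used in the opposite direction (to derive (\ref{89})).
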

\begin{proof}
	
	Let $u= u^{-} + s \varphi_{\varepsilon}$ such that $||u^{-} + s \varphi_{\varepsilon}||_{2^{*}}=1$ . By (\ref{91}), (\ref{89}), (\ref{90}) and continuous Sobolev embeddings
	\begin{eqnarray}
	\int_{\mathbb{R}^{N}} (|\nabla u|^{2}+V(x) u^{2})&=& \int_{\mathbb{R}^{N}} (|\nabla (s\varphi_{\varepsilon})|^{2}+ V(x) (s\varphi_{\varepsilon})^{2})  + 2s\int_{\mathbb{R}^{N}} (\nabla \varphi_{\varepsilon} \nabla u^{-}+V(x) \varphi_{\varepsilon} u^{-}) + \nonumber \\
	&-& ||u^{-}||^{2}\nonumber \\
	&\leq & \frac{\displaystyle\int_{\mathbb{R}^{N}} (|\nabla \varphi_{\varepsilon}|^{2}+ V(x) \varphi_{\varepsilon}^{2})}{||\varphi_{\varepsilon}||_{2^{*}}^{2}} ||s\varphi_{\varepsilon}||_{2^{*}}^{2}+ 2s\;O(\varepsilon^{\frac{N-2}{2}})||u^{-}||_{2} - ||u^{-}||^{2} \nonumber \\
	&\leq & \frac{\displaystyle\int_{\mathbb{R}^{N}} (|\nabla \varphi_{\varepsilon}|^{2}+ V(x) \varphi_{\varepsilon}^{2})}{||\varphi_{\varepsilon}||_{2^{*}}^{2}} (1+ c_{1}||\varphi_{\varepsilon}||^{2^{*}}_{2^{*}} ||u^{-}||_{2}) +\nonumber \\
	&+&  2s\;O(\varepsilon^{\frac{N-2}{2}})||u^{-}||_{2} - \tilde{c}||u^{-}||_{2}^{2} \nonumber \\
	&\leq & \frac{\displaystyle\int_{\mathbb{R}^{N}} (|\nabla \varphi_{\varepsilon}|^{2}+ V(x) \varphi_{\varepsilon}^{2})}{||\varphi_{\varepsilon}||_{2^{*}}^{2}} + O(\varepsilon^{\frac{N-2}{2}})||u^{-}||_{2} -\tilde{c}||u^{-}||_{2}^{2} \nonumber
	\end{eqnarray}
	that is,
	$$\sup_{u \in Z_{\varepsilon}, ||u||_{2^{*}}=1} \int_{\mathbb{R}^{N}} (|\nabla u|^{2}+V(x) u^{2}) \leq \frac{\displaystyle\int_{\mathbb{R}^{N}} (|\nabla \varphi_{\varepsilon}|^{2}+ V(x) \varphi_{\varepsilon}^{2})}{||\varphi_{\varepsilon}||_{2^{*}}^{2}}+  O(\varepsilon^{\frac{N-2}{2}})||u^{-}||_{2}-\tilde{c}||u^{-}||_{2}^{2}.$$
	Knowing that 
	$$ \alpha \cdot \beta \leq \frac{\alpha^{2}}{2}+ \frac{\beta^{2}}{2},\;\forall\; \alpha, \beta \geq 0,$$
	we get
	\begin{eqnarray}
	O(\varepsilon^{\frac{N-2}{2}})||u^{-}||_{2}&=& O(\varepsilon^{\frac{N-2}{2}})(\tilde{c})^{\frac{1}{2}} ||u^{-}||_{2} \nonumber \\
	&\leq& \frac{O(\varepsilon^{\frac{N-2}{2}})^{2}}{2}+ \frac{\tilde{c} ||u^{-}||_{2}^{2}}{2} \nonumber \\
	&=& \frac{O(\varepsilon^{N-2})}{2}+ \frac{\tilde{c} ||u^{-}||_{2}^{2}}{2}. \nonumber 
	\end{eqnarray}
	\begin{eqnarray}
	\sup_{u \in Z_{\varepsilon}, ||u||_{2^{*}}=1} \int_{\mathbb{R}^{N}} (|\nabla u|^{2}+V(x) u^{2}) &\leq& \frac{\displaystyle\int_{\mathbb{R}^{N}} (|\nabla \varphi_{\varepsilon}|^{2}+ V(x) \varphi_{\varepsilon}^{2})}{||\varphi_{\varepsilon}||_{2^{*}}^{2}}+  \frac{O(\varepsilon^{N-2})}{2}+ \frac{\tilde{c} }{2}||u^{-}||_{2}^{2}-\tilde{c}||u^{-}||_{2}^{2} \nonumber \\
	&=& \frac{\displaystyle\int_{\mathbb{R}^{N}} (|\nabla \varphi_{\varepsilon}|^{2}+ V(x) \varphi_{\varepsilon}^{2})}{||\varphi_{\varepsilon}||_{2^{*}}^{2}}+  O(\varepsilon^{N-2})-\frac{\tilde{c} }{2}||u^{-}||_{2}^{2} \nonumber \\
	&\leq & \frac{\displaystyle\int_{\mathbb{R}^{N}} (|\nabla \varphi_{\varepsilon}|^{2}+ V(x) \varphi_{\varepsilon}^{2})}{||\varphi_{\varepsilon}||_{2^{*}}^{2}}+  O(\varepsilon^{N-2}). \nonumber
	\end{eqnarray}
	Therefore, by the Proposition \ref{96}, there is $\varepsilon_{0}>0$ such that
	$$\sup_{u \in Z_{\varepsilon}, ||u||_{2^{*}}=1} \int_{\mathbb{R}^{N}} (|\nabla u|^{2}+V(x) u^{2}) < S,\;\forall\; \varepsilon \in (0, \varepsilon_{0}).$$
\end{proof}

For $\varepsilon \in (0, \varepsilon_{0})$, by Proposition \ref{97} and \ref{98},  for $u \in Z_{\varepsilon}$,  we conclude
\begin{eqnarray}
J(u) &\leq& J(tu)\leq  \frac{1}{N} \left(\frac{\displaystyle\int_{\mathbb{R}^{N}}[|\nabla u|^{2}+V(x)u^{2}]}{||u||_{2^{*}}^{2}}\right)^{\frac{N}{2}} \nonumber\\
&\leq & \frac{1}{N} \left(\sup_{w \in Z_{\varepsilon}, ||w||_{2^{*}}=1}\displaystyle\int_{\mathbb{R}^{N}}[|\nabla w|^{2}+V(x)w^{2}]\right)^{\frac{N}{2}}, \nonumber
\end{eqnarray}
this is,
$$\sup_{u \in Z_{\varepsilon}} J(u) \leq \frac{1}{N} \left(\sup_{w \in Z_{\varepsilon}, ||w||_{2^{*}}=1}\displaystyle\int_{\mathbb{R}^{N}}[|\nabla w|^{2}+V(x)w^{2}]\right)^{\frac{N}{2}}< \frac{1}{N} S^\frac{N}{2}.$$
Since $c_{\gamma} \in [b_{\gamma}, d_{\gamma}]$ and 
$$d_{\gamma}= \sup_{\mathcal{M}}I_{\gamma}$$
where 
$$\mathcal{M}=\left\{u=u^{-}+t u^{+}\;; ||u||\leq \rho, t \geq 0 \;\;\mbox{and}\;\; u^{-} \in E^{-}\right\}$$
for some $u^{+} \in E^{+}\backslash \{0\}.$ We get $\mathcal{M} \subset Z_{\varepsilon}$ for $u^{+}= \varphi_{\varepsilon}^{+}$ and consequently
$$d_{\gamma} \leq \sup_{u \in Z_{\varepsilon}} J(u)<\frac{1}{N} S^\frac{N}{2}.$$
\begin{itemize}
	\item Case $N=3$.
\end{itemize}	
	
\begin{remark}\label{111}
	Consider
	 $$
	F_{1}(x,t)= \left\{\begin{aligned}
	0,\;\;\mbox{if}\;\; |t|\leq a \\
	\frac{1}{q}|t|^{q}- \frac{1}{q}a^{q},\;\; \mbox{if}\;\; |t|>a.
	\end{aligned}
	\right. 
	$$
Fixed $R>0$, for $x \in B_{R}$, we obtain: 
		
		If $|u(x)|\leq a$, $F_{1}(x,u) = 0$ and
		$$\frac{1}{q}\int_{ B_{R}}|u(x)|^{q}\leq \frac{1}{q}a^{q}|B_{R}|.$$
		
		In case $|u(x)|>a$, 
		$$\int_{ B_{R}}F_{1}(x,u)= \frac{1}{q}\int_{ B_{R}}|u(x)|^{q}-\frac{1}{q}a^{q}|B_{R}|.$$
		So,
		$$F_{1}(x,u)\geq \frac{1}{q}\int_{ B_{R}}|u(x)|^{q}-\frac{1}{q}a^{q}|B_{R}|.$$
\end{remark}
	
\begin{lemma}\label{106}
Given $z_{0} \in E^{+}\backslash \{0\}$ and $s_{0}>0$. Let $\rho>0$ given in the Lemma \ref{60}, there are $K>0$ and $R>0$ such that
	$$K||sz_{0}||_{L^{p}(B_{R})}\leq ||u^{-}+sz_{0}||_{L^{p}(B_{R})},$$
	$u^{-} \in E^{-}$, $s\geq s_{0}$, $p \in (2,2^{*})$ and $||u^{-}+sz_{0}|| \leq \rho$.
\end{lemma}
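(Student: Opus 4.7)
The plan is to argue by contradiction. Assume the conclusion fails; then for every $n \in \mathbb{N}$, taking $R_n = n$ and $K_n = 1/n$, we can find $s_n \ge s_0$ and $u_n^- \in E^-$ with $\|u_n^- + s_n z_0\| \le \rho$ such that
\begin{equation*}
\|u_n^- + s_n z_0\|_{L^p(B_n)} < \tfrac{1}{n}\,\|s_n z_0\|_{L^p(B_n)}.
\end{equation*}
Since $u_n^-$ and $s_n z_0$ lie in the $\|\cdot\|$-orthogonal subspaces $E^-$ and $E^+$, we have $\|u_n^-\|^2 + s_n^2\|z_0\|^2 \le \rho^2$, which yields $s_n \in [s_0,\,\rho/\|z_0\|]$ and $\|u_n^-\| \le \rho$. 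Passing to a subsequence, $s_n \to s_* \in [s_0,\,\rho/\|z_0\|]$ and $u_n^- \rightharpoonup u^-$ in $H^1(\mathbb{R}^N)$ with $u^- \in E^-$ (since $E^-$ is closed, hence weakly closed).

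The crucial ingredient is Proposition \ref{87}, which gives $\|u_n^-\|_{W^{1,\infty}(\mathbb{R}^N)} \le c_0 \|u_n^-\|_2 \le C$, so $(u_n^-)$ is uniformly bounded and equi-Lipschitz on $\mathbb{R}^N$. By Arzel\`a--Ascoli, after a further subsequence, $u_n^- \to v$ locally uniformly for some $v$. Rellich--Kondrachov gives $u_n^- \to u^-$ in $L^2_{\mathrm{loc}}(\mathbb{R}^N)$, so $v = u^-$ a.e., and therefore $u_n^- + s_n z_0 \to u^- + s_* z_0$ in $L^p(B_R)$ for every $R > 0$.

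On the other hand, $z_0 \in H^1(\mathbb{R}^N) \subset L^p(\mathbb{R}^N)$ and $s_n$ is bounded, so $\|s_n z_0\|_{L^p(B_n)} \le s_n \|z_0\|_p$ is uniformly bounded. The contradicted inequality then forces, for each fixed $R$,
\begin{equation*}
\|u_n^- + s_n z_0\|_{L^p(B_R)} \;\le\; \|u_n^- + s_n z_0\|_{L^p(B_n)} \;<\; \tfrac{1}{n}\,s_n \|z_0\|_p \;\longrightarrow\; 0.
\end{equation*}
Combining this with the $L^p(B_R)$-convergence from the previous paragraph yields $u^- + s_* z_0 = 0$ a.e.\ on $\mathbb{R}^N$. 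Since $u^- \in E^-$ and $-s_* z_0 \in E^+$ with $E^- \cap E^+ = \{0\}$, we conclude $s_* z_0 = 0$, contradicting $s_* \ge s_0 > 0$ and $z_0 \neq 0$.

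The main obstacle in this plan is the passage from weak convergence of $u_n^-$ to pointwise (equivalently, local uniform) identification of the limit: weak convergence alone is not enough to control $u_n^- + s_n z_0$ pointwise on the growing balls $B_n$. This is precisely where Proposition \ref{87} is decisive, because the $L^\infty$-gradient bound on the negative part supplies equi-Lipschitz continuity and hence the Arzel\`a--Ascoli compactness needed to close the contradiction.
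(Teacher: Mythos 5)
Your argument is correct and follows essentially the same route as the paper: negate the statement, extract sequences $s_{n}$, $u_{n}^{-}$ with $\|u_{n}^{-}+s_{n}z_{0}\|\leq\rho$, use orthogonality to bound $s_{n}$ and $\|u_{n}^{-}\|$, pass to the weak $H^{1}$ limit and the strong $L^{p}_{loc}$ limit, and reach a contradiction from $E^{+}\cap E^{-}=\{0\}$ (the paper works instead with the normalized sequence $u_{n}^{-}/s_{n}$, which it shows converges to $-z_{0}$, but this is cosmetic since $s_{0}\leq s_{n}\leq\rho/\|z_{0}\|$). One correction: your closing claim that Proposition \ref{87} is the decisive ingredient is mistaken --- the identification of the local limit already follows from the compact embedding $H^{1}(B_{R})\hookrightarrow L^{p}(B_{R})$ for $p<2^{*}$ (Rellich--Kondrachov applied to the bounded sequence $(u_{n}^{-})$), which is exactly what the paper uses, so the $W^{1,\infty}$ bound and the Arzel\`a--Ascoli step are superfluous rather than essential.
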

\begin{proof}
	
	Suppose that there are $s_{n} \geq s_{0}$, $u_{n}^{-} \in Y$ and $R_{n}\rightarrow +\infty$ such that
	$$ \left\| \frac{u_{n}^{-}}{s_{n}} +z_{0}\right\|_{L^{p}(B_{R_{n}})}=\frac{||u_{n}^{-}+s_{n}z_{0}||_{L^{p}(B_{R_{n}})}}{s_{n}}< \frac{||z_{0}||_{L^{p}(B_{R_{n}})}}{n},\;\forall\; n \in \mathbb{N}.$$
	So,
	\begin{equation}\label{109}
	\frac{u_{n}^{-}}{s_{n}}\longrightarrow -z_{0}\;\;\mbox{in}\;\;L^{p}(\mathbb{R}^{N}).
	\end{equation}
	On the other hand,
	$$\left\|\frac{u_{n}^{-}}{s_{n}}\right\|^{2} \leq \frac{||u_{n}^{-}+s_{n}z_{0}||^{2} }{s_{0}^{2}}\leq \frac{\rho^{2}}{s_{0}^{2}}.$$
	There is $w \in E^{-}$ such that, goingo to a subsequence if necessary,
	$$\frac{u_{n}^{-}}{s_{n}}\rightharpoonup w\;\;\mbox{in}\;\; H^{1}(\mathbb{R}^{N}),$$
	consequently
	\begin{equation}\label{110}
	\frac{u_{n}^{-}}{s_{n}}\longrightarrow w\;\; \mbox{in}\;\; L_{loc}^{p}(\mathbb{R}^{N}).
	\end{equation}
	By (\ref{109}) and (\ref{110}) we obtain
	$$w=-z_{0}\;\;\mbox{a.e in}\;\;\mathbb{R}^{N},$$
%	By the dominated convergence theorem of Lebesgue
%	$$0= \lim_{R\rightarrow +\infty} \int_{ B_{R}}|w(x)+z_{0}(x)|^{p}= \int_{\mathbb{R}^{N}}|w(x)+z_{0}(x)|^{p}=||w+z_{0}||_{p}^{p},$$
that is, $w=-z_{0} \in E^{+}\backslash \{0\}$ which contradicts the fact $w \in E^{-}$. 
	
\end{proof}

\begin{lemma}\label{112}
	Given $z_{0} \in E^{+}\backslash\{0\}$, there is $s_{0}>0$ such that
	$$d_{\gamma}=\sup_{\mathcal{M}} I_{\gamma} = \sup_{A} I_{\gamma},$$
	where 
	$$A=\{u^{-}+sz_{0}\;;\; ||u^{-}+sz_{0}||\leq \rho, u^{-} \in E^{-}\;\;\mbox{and}\;\; s\geq s_{0} \},$$
	and $\rho>0$ is given in the Lemma \ref{106}.
\end{lemma}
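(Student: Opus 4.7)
The plan is to note that $A \subset \mathcal{M}$ gives $\sup_{A} I_{\gamma} \leq \sup_{\mathcal{M}} I_{\gamma}$ for free, so only the reverse inequality needs attention. The key elementary bound comes from the orthogonal decomposition (\ref{Functional}) together with the positivity of $\Psi_{\gamma}$ (since $F_{\gamma} \geq 0$), which yields, for every $u^{-} \in E^{-}$ and every $s \geq 0$,
$$I_{\gamma}(u^{-} + sz_{0}) = \frac{s^{2}}{2}\|z_{0}\|^{2} - \frac{1}{2}\|u^{-}\|^{2} - \Psi_{\gamma}(u^{-} + sz_{0}) \leq \frac{s^{2}}{2}\|z_{0}\|^{2}.$$
This estimate is uniform in $u^{-}$, so any $u^{-} + sz_{0} \in \mathcal{M}$ with $0 \leq s < s_{0}$ automatically satisfies $I_{\gamma}(u^{-} + sz_{0}) \leq \frac{s_{0}^{2}}{2}\|z_{0}\|^{2}$.

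Next I would analyse the fibering map $\phi(s) := I_{\gamma}(sz_{0})$ on the interval $[0, \rho/\|z_{0}\|]$. Using the pointwise bound $F_{\gamma}(x,t) \leq \frac{1}{2^{*}}|t|^{2^{*}} + \frac{\gamma}{q}|t|^{q}$ and the Sobolev embeddings, one obtains $\Psi_{\gamma}(sz_{0}) = O(s^{q} + s^{2^{*}})$ as $s \to 0^{+}$. Since $q > 2$ and $2^{*} > 2$, this forces $\phi(s) = \frac{s^{2}}{2}\|z_{0}\|^{2} + o(s^{2}) > 0$ for all sufficiently small $s > 0$. By continuity $\phi$ attains a strictly positive maximum $c^{*} := \max_{s \in [0, \rho/\|z_{0}\|]} \phi(s) > 0$ at some $s^{*} \in (0, \rho/\|z_{0}\|]$.

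Finally, I would choose any $s_{0} > 0$ with $s_{0} \leq \sqrt{2c^{*}}/\|z_{0}\|$. Since
$$c^{*} = \phi(s^{*}) = \frac{(s^{*})^{2}}{2}\|z_{0}\|^{2} - \Psi_{\gamma}(s^{*}z_{0}) \leq \frac{(s^{*})^{2}}{2}\|z_{0}\|^{2},$$
this automatically gives $s_{0} \leq s^{*}$, so $s^{*}z_{0} \in A$ and therefore $\sup_{A} I_{\gamma} \geq c^{*}$. For every $u = u^{-} + sz_{0} \in \mathcal{M}$ with $s < s_{0}$, the uniform bound of the first step yields $I_{\gamma}(u) \leq \frac{s_{0}^{2}}{2}\|z_{0}\|^{2} \leq c^{*} \leq \sup_{A} I_{\gamma}$, while for $s \geq s_{0}$ one has $u \in A$ directly. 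Passing to the supremum over $\mathcal{M}$ gives $d_{\gamma} = \sup_{\mathcal{M}} I_{\gamma} = \sup_{A} I_{\gamma}$. No deep obstacle appears in this argument; the only point requiring genuine care is the positivity of $c^{*}$, which is a routine mountain-pass-type estimate on the ray $\{sz_{0}\}$ using the superlinearity of $F_\gamma$.
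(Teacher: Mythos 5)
Your argument is correct, and it reaches the conclusion by a somewhat different route than the paper. The paper's proof takes a maximizing sequence $u_{n}^{-}+s_{n}z_{0}$ for $\sup_{\mathcal{M}}I_{\gamma}$ and argues by contradiction: if $s_{n_{j}}\to 0$ along a subsequence, then the same estimate you use, $I_{\gamma}(u^{-}+sz_{0})\leq \frac{s^{2}}{2}\|z_{0}\|^{2}$, forces $I_{\gamma}(u_{n_{j}}^{-}+s_{n_{j}}z_{0})\to 0$, contradicting $d_{\gamma}>0$ (a fact the paper imports from the linking geometry, since $d_{\gamma}\geq b_{\gamma}>\sup_{\mathcal{M}_{0}}I_{\gamma}\geq I_{\gamma}(0)=0$); the threshold $s_{0}$ is then whatever lower bound the maximizing sequence admits. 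You instead manufacture $s_{0}$ explicitly from the fibering map $\phi(s)=I_{\gamma}(sz_{0})$: the mountain-pass-type expansion $\phi(s)=\frac{s^{2}}{2}\|z_{0}\|^{2}+o(s^{2})$ gives a strictly positive maximum $c^{*}$ attained at some $s^{*}>0$, the choice $s_{0}\leq \sqrt{2c^{*}}/\|z_{0}\|$ self-consistently guarantees $s_{0}\leq s^{*}$ so that $s^{*}z_{0}\in A$, and then every point of $\mathcal{M}$ with $s<s_{0}$ has energy at most $\frac{s_{0}^{2}}{2}\|z_{0}\|^{2}\leq c^{*}\leq\sup_{A}I_{\gamma}$, so it cannot raise the supremum. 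What your version buys is self-containedness and constructiveness: you do not need to invoke $d_{\gamma}>0$ from the linking setup (indeed $c^{*}>0$ re-proves it), and $s_{0}$ is given by an explicit formula rather than by a compactness/contradiction argument; the cost is the extra (routine) fibering-map analysis using $F_{\gamma}(x,t)\leq \frac{1}{2^{*}}|t|^{2^{*}}+\frac{\gamma}{q}|t|^{q}$ and the Sobolev embedding. Both proofs ultimately rest on the same two ingredients, namely $\Psi_{\gamma}\geq 0$ combined with the decomposition (\ref{Functional}), and the strict positivity of the supremum.
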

\begin{proof}
	
	By the definition of supreme $(s_{n}) \subset [0, +\infty)$ and $(u_{n}^{-})\subset E^{-}$ such that $||u_{n}^{-}+s_{n}z_{0}||\leq \rho$ and
	\begin{equation}\label{108}
	d_{\gamma}-\frac{1}{n} \leq I_{\gamma}(u_{n}^{-}+s_{n}z_{0})< \sup_{B_{\rho}\cap E} I_{\gamma}= d_{\gamma}>0\;,\;\forall\; n \in \mathbb{N}.
	\end{equation}
	\begin{claim}
	There is $s_{0}>0$ such that $s_{n}\geq s_{0}$ for all $n \in \mathbb{N}$.
	\end{claim}
	
	In fact, suppose that there is $(s_{n_{j}})\subset (s_{n})$ such that $s_{n_{j}}\rightarrow 0$, then
	$$I_{\gamma}(u_{n_{j}}^{-}+s_{n_{j}}z_{0})\leq \frac{s_{n_{j}}^{2}}{2}||z_{0}||^{2}\rightarrow 0.$$
	Choosing $n_{j_{0}} \in \mathbb{N}$ such that $\frac{s_{n_{j}}^{2}}{2}||z_{0}||^{2}<\frac{d_{\gamma}}{2}$, for $n_{j} \geq n_{j_0}$, we obtain
	$$d_{\gamma}-\frac{1}{n_{j}} < \frac{d_{\gamma}}{2}$$
	what contradicts (\ref{108}).
	
	So $u_{n}^{-}+s_{n}z_{0} \in A$ and
	$$d_{\gamma} \geq \sup_{A}I_{\gamma} \geq I_{\gamma} (u_{n}^{-}+s_{n}z_{0})=\sup_{B_{\rho}\cap E}I_{\gamma}+o_{n}(1)= d_{\gamma}+o_{n}(1).$$
\end{proof}

\begin{lemma} There is $\gamma>0$ such that 
	$$\sup_{u \in A}I_{\gamma}(u)< \dfrac{S^{\frac{N}{2}}}{N },$$
	where $A$ is given in the Lemma \ref{112}. In addition, for this is $\gamma>0$,  
	$$c_{\gamma} \leq d_{\gamma}< \dfrac{S^{\frac{N}{2}}}{N }.$$
\end{lemma}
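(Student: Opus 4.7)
The plan is to control $I_\gamma$ on $A$ from above by a quantity whose negative part can be made arbitrarily large by enlarging $\gamma$, while the error introduced by the cutoff $a$ is kept small by choosing $a$ small once $\gamma$ is fixed. The leverage that makes this possible is the lower bound $s\ge s_0>0$ granted by Lemma~\ref{112} together with the lower bound of Lemma~\ref{106}, which together guarantee a uniform positive mass $\int_{B_R}|u|^q$ for every $u\in A$.

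First, fix $z_0\in E^{+}\setminus\{0\}$, apply Lemma~\ref{112} to obtain the corresponding $s_0>0$ so that $d_\gamma=\sup_A I_\gamma$, and then invoke Lemma~\ref{106} with exponent $p=q\in(2,2^{*})$ to produce constants $K>0$ and $R>0$ such that, for every $u=u^{-}+sz_0\in A$,
$$ \|u\|_{L^{q}(B_R)}^{q} \;\ge\; K^{q}\|sz_0\|_{L^{q}(B_R)}^{q} \;\ge\; K^{q}s_0^{q}\|z_0\|_{L^{q}(B_R)}^{q} \;=:\; M \;>\; 0. $$
Next, combine the decomposition (\ref{Functional}) with $\|u^{+}\|^{2}=s^{2}\|z_0\|^{2}\le\rho^{2}$ and the non-negativity of $F_\gamma$ to write, for every $u\in A$,
$$ I_\gamma(u) \;\le\; \tfrac{1}{2}\|u^{+}\|^{2} - \int_{\mathbb{R}^{N}} F_\gamma(x,u) \;\le\; \tfrac{\rho^{2}}{2} - \gamma\!\int_{B_R} F_1(x,u), $$
where $F_1$ is as in Remark~\ref{111}, using $F_\gamma(x,t)=\tfrac{1}{2^{*}}|t|^{2^{*}}+\gamma F_1(x,t)\ge\gamma F_1(x,t)$. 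Applying the pointwise inequality $F_1(x,t)\ge \tfrac{1}{q}|t|^{q}-\tfrac{1}{q}a^{q}$ from Remark~\ref{111} and the mass bound above yields
$$ \sup_{u\in A} I_\gamma(u) \;\le\; \tfrac{\rho^{2}}{2} - \tfrac{\gamma}{q}\bigl(M - a^{q}|B_R|\bigr). $$

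Now choose $\gamma>0$ large enough that
$$ \tfrac{\gamma M}{2q} \;>\; \tfrac{\rho^{2}}{2} + \tfrac{S^{N/2}}{N}, $$
which is possible since $M>0$ and $\rho, S$ are fixed. For this $\gamma$, set
$$ a_0=a_0(\gamma) := \Bigl(\tfrac{M}{2|B_R|}\Bigr)^{1/q}, $$
so that for every $a\in[0,a_0]$ one has $a^{q}|B_R|\le M/2$ and consequently
$$ \sup_{u\in A} I_\gamma(u) \;\le\; \tfrac{\rho^{2}}{2} - \tfrac{\gamma M}{2q} \;<\; -\tfrac{S^{N/2}}{N} \;<\; \tfrac{S^{N/2}}{N}. $$
Since $c_\gamma\in[b_\gamma,d_\gamma]$ and Lemma~\ref{112} gives $d_\gamma=\sup_A I_\gamma$, this immediately yields $c_\gamma\le d_\gamma<S^{N/2}/N$, as required, with $N=3$.

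The one step that requires real care—and that I expect to be the heart of the argument—is the uniform positive lower bound $\|u\|_{L^{q}(B_R)}^{q}\ge M>0$ for $u\in A$. Without the strict separation $s\ge s_0>0$ from Lemma~\ref{112}, the $L^{q}$ mass of $u$ on any fixed ball could collapse and the gain from the term $\gamma F_1$ would vanish, destroying the scheme. Once this lower bound is secured, the choice of $\gamma$ and $a_0(\gamma)$ reduces to elementary two-parameter balancing.
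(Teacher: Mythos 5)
Your proof follows the paper's blueprint exactly in its ingredients (Lemma \ref{112} to reduce to $s\geq s_{0}$, Lemma \ref{106} with $p=q$ for the $L^{q}$ mass on $B_{R}$, Remark \ref{111} for $F_{1}\geq \frac{1}{q}|t|^{q}-\frac{1}{q}a^{q}$, then ``$\gamma$ large, $a$ small''), but the execution has a concrete flaw in the final choice of $\gamma$. You discard the $s$-dependence entirely, bounding the positive part by $\rho^{2}/2$ and the mass from below by the constant $M=K^{q}s_{0}^{q}\|z_{0}\|_{L^{q}(B_{R})}^{q}$, and then demand $\frac{\gamma M}{2q}>\frac{\rho^{2}}{2}+\frac{S^{N/2}}{N}$. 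This forces $\sup_{A}I_{\gamma}<-S^{N/2}/N<0$, which is incompatible with the very framework you are invoking: since $0\in\mathcal{M}$, one has $d_{\gamma}=\sup_{\mathcal{M}}I_{\gamma}\geq I_{\gamma}(0)=0$, and Lemma \ref{112} asserts $d_{\gamma}=\sup_{A}I_{\gamma}$. So a $\gamma$ with your property cannot exist compatibly with Lemma \ref{112}. The hidden reason is that $\rho$, $s_{0}$, $K$, $R$ — and hence $M$ — all depend on $\gamma$ (through the linking geometry and Lemmas \ref{106}, \ref{112}), so ``take $\gamma$ large with $M$ fixed'' is not an available move; your requirement that $\gamma M$ dominate $\rho^{2}$ is exactly the regime where this circularity bites.

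The paper avoids having to beat $\rho^{2}/2$ at all by keeping the competition in $s$: it bounds $\sup_{A}I_{\gamma}$ by
$$\max_{s\geq 0}\Bigl(\tfrac{s^{2}}{2}\|z_{0}\|^{2}-\tfrac{K\gamma s^{q}}{q}\|z_{0}\|_{L^{q}(B_{R})}^{q}+\tfrac{\gamma}{q}a^{q}|B_{R}|\Bigr)
=\Bigl(\tfrac{1}{2}-\tfrac{1}{q}\Bigr)\Bigl(\tfrac{\|z_{0}\|^{2}}{\|z_{0}\|_{L^{q}(B_{R})}^{2}}\Bigr)^{\frac{q}{q-2}}(K\gamma)^{-\frac{2}{q-2}}+\tfrac{\gamma}{q}a^{q}|B_{R}|,$$
a nonnegative quantity in which the quadratic term has been absorbed exactly by the maximization; one then only needs the first (decaying) term below $S^{N/2}/(2N)$ and $a\leq a_{0}(\gamma)$ for the second, which is a far milder and non-contradictory demand on $\gamma$. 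To repair your argument, replace the crude bound $\frac{1}{2}\|u^{+}\|^{2}\leq\frac{\rho^{2}}{2}$ by this optimization in $s$ (or otherwise keep the $s^{q}$ growth of the negative term), and aim only for $\sup_{A}I_{\gamma}<S^{N/2}/N$ rather than a negative value.
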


\begin{proof}
	
	Since $I_{\gamma}(u) \leq J_{\gamma}(u)$ for all $u \in H^{1}(\mathbb{R}^{N})$, where
	$$J_{\gamma}(u)= \frac{1}{2}\int_{\mathbb{R}^{N}} (|\nabla u|^{2}+V(x)u^{2})- \gamma \int_{\mathbb{R}^{N}}F_{1}(x,u),$$
	so just prove the estimate to $J_{\gamma}$. 
	
	Let $u \in A$, by Remark \ref{111} and Lemma \ref{106}, there are $K>0$ and $R>0$ such that
	\begin{eqnarray}
	J_{\gamma}(u)&\leq & \frac{s^{2}}{2}||z_{0}||^{2}-\frac{1}{2}||u^{-}||^{2}- \frac{\gamma}{q} \int_{B_{R}} |u^{-}+sz_{0}|^{q}+\frac{\gamma}{q}a^{q}|B_{R}| \nonumber \\
	&\leq & \frac{s^{2}}{2}||z_{0}||^{2}- \frac{K\gamma}{q} \int_{B_{R}} |sz_{0}|^{q}+\frac{\gamma}{q}a^{q}|B_{R}|, \nonumber
	\end{eqnarray}
	that is,
	$$\sup_{u \in A}I_{\gamma}(u) \leq \sup_{s\geq s_{0}} \left(\frac{s^{2}}{2}||z_{0}||^{2}- \frac{K\gamma s^{q}}{q} \int_{B_{R}} |z_{0}|^{q}+\frac{\gamma}{q}a^{q}|B_{R}|\right).$$
	Set the function
	\begin{eqnarray}
	h:&[0, +\infty)&\longrightarrow \mathbb{R} \nonumber \\
	&s& \longmapsto h(s)=\frac{s^{2}}{2}||z_{0}||^{2}- \frac{K\gamma s^{q}}{q} ||z_{0}||_{L^{q}(B_{R})}^{q} +\frac{\gamma}{q}a^{q}|B_{R}|\nonumber.
	\end{eqnarray}
	Note that
	\begin{itemize}
		\item $h(s)\rightarrow -\infty$ as $s\rightarrow +\infty$;
		\item $h(s)>0$ for $s\approx 0^{+}$.
	\end{itemize}
	Therefore, there is $t_{0} \in (0, +\infty)$ such that $h'(t_{0})=0$, this is,
	$$t_{0}= \left(\frac{||z_{0}||^{2}}{K\gamma ||z_{0}||_{q}^{q}}\right)^{\frac{1}{q-2}}.$$
	So, 
	$$\max_{s\geq 0}h(s)=h(t_{0})=\left(\frac{1}{2}-\frac{1}{q}\right) \left(\frac{||z_{0}||^{2}}{||z_{0}||_{L^{q}(B_{R})}^{2}}\right)^{\frac{q}{q-2}} \left(\frac{1}{K\gamma}\right)^{\frac{2}{q-2}}+ \frac{\gamma}{q}a^{q}|B_{R}|.$$
	Fixed
	$$\gamma > \left(\frac{1}{2}-\frac{1}{q}\right)^{\frac{q-2}{2}} \left(\frac{||z_{0}||^{2}}{||z_{0}||_{L^{q}(B_{R})}^{2}}\right)^{\frac{q}{2}} \left(\frac{1}{K}\right)^{\frac{2}{q-2}} \left(\frac{2N}{S^{\frac{N}{2}}}\right)^{\frac{q-2}{2}},$$
	we obtain
	$$\max_{s\geq 0}h(s)=h(t_{0})< \frac{S^{\frac{N}{2}}}{2N}+ \frac{\gamma}{q}a^{q}|B_{R}|.$$
	Lastly, choosing $a \geq 0$ such that 
	$$0\leq a \leq  \left(\frac{q S^{\frac{N}{2}}}{2 \gamma N |B_{R}|}\right)^{\frac{1}{q}}$$
	we conclude that 
	$$\max_{s\geq 0}h(s)=h(t_{0})< \frac{S^{\frac{N}{2}}}{2N}+\frac{S^{\frac{N}{2}}}{2N}= \frac{S^{\frac{N}{2}}}{N} .$$
	
\end{proof}

	Now our goal is to prove that 
	$$-\Delta v(x) + V(x) v(x) \in \partial_{t} F_{\gamma}(x,v(x))\;\;\mbox{a.e in}\;\; \mathbb{R}^{N},$$
	where $v$ is the weak limit of $(v_n)$ in $H^{1}(\mathbb{R}^N)$. 
	
	From the study above, there exists $(\tilde{\omega}_{n}) \subset \partial I_{\gamma}(v_{n})$ such that $\tilde{\omega}_{n}= Q'(v_{n})- \tilde{\rho}_{n}$ and $||\tilde{\omega}_{n}||_{*}=o_{n}(1)$ where $(\tilde{\rho}_{n}) \subset \partial \Psi_{\gamma}(v_{n})$. For $\phi \in H^{1}(\mathbb{R}^{N})$, we obtain
	\begin{eqnarray}
	\left <\tilde{\rho}_{n}, \phi \right>= \left <Q'(v_{n}), \phi \right>-\left <\tilde{\omega}_{n}, \phi \right>\rightarrow \left <Q'(v), \phi \right>, \;\mbox{as}\;\;n\rightarrow +\infty,\nonumber
	\end{eqnarray}
	that is, $\tilde{\rho}_{n} \stackrel{*}{\rightharpoonup} Q'(v)$ in $(H^{1}(\mathbb{R}^{N}))^{*}$. Then, by Proposition \ref{7},  $Q'(v) \in \partial \Psi_{\gamma} (v)$. Thereby, $Q'(v)= \rho \in \partial \Psi_{\gamma} (v)$, and so, 
	$$\int_{\mathbb{R}^{N}}(\nabla v \nabla \phi + V v \phi)= \int_{\mathbb{R}^{N}} \rho \phi\;\;\mbox{for all}\;\;\phi \in H^{1}(\mathbb{R}^{N}),$$
	where $\rho(x) \in \partial_{t} F_{\gamma}(x,v(x))\;\;\mbox{a.e in}\;\; \mathbb{R}^{N}.$ Hence
	$$
	\left\{\begin{aligned}
	-\Delta v + V(x)v &= \rho(x)\;\;\mbox{in}\;\;\mathbb{R}^{N}, \\
	v \in H^{1}(\mathbb{R}^{N}).
	\end{aligned}
	\right.
	$$
	Since $\rho \in L_{loc}^{\frac{2N}{N+2}}(\mathbb{R}^{N})$, the elliptic regularity theory gives that $v \in W_{loc}^{2,\frac{2N}{N+2}}(\mathbb{R}^{N})$ and
	$$-\Delta v + V(x)v = \rho(x)\;\;\mbox{a.e in}\;\; \mathbb{R}^{N},$$
	that is,
	$$
	-\Delta v(x) + V(x) v(x) \in \partial_{t} F_{\gamma}(x,v(x))\;\;\mbox{a.e in}\;\; \mathbb{R}^{N},
	$$
	finishing the proof of Theorem \ref{Teorema1}.

\end{document}